\newtheorem{thm}{Theorem}[section]
\newtheorem{cor}[thm]{Corollary}
\newtheorem{conj.}[thm]{Conjecture}
\newtheorem{lem}[thm]{Lemma}
\newtheorem{prop}[thm]{Proposition}
\theoremstyle{definition}
\newtheorem{defn}[thm]{Definition}
\theoremstyle{remark}
\numberwithin{equation}{section}
\newtheorem{exa}[thm]{Example}
\newcommand{\Real}{\mathbb R}
\newcommand{\Comp}{\mathbb C}
\newcommand{\NN}{\mathbb N}
\newcommand{\RR}{\mathbb R}
\newcommand{\h}{\mathcal{H}}
\newcommand{\K}{\mathcal{K}}
\newcommand{\R}{\mathcal{R}}
\begin{document}

\title[Multipliers for Continuous Frames in Hilbert spaces]
{Multipliers for continuous frames in Hilbert spaces}%
\author[P. Balazs, D. Bayer and A. Rahimi]{P. Balazs$^\dagger$, D. Bayer$^\dagger$ and A. Rahimi$^*$,}
\address{$^\dagger$ Acoustics Research Institute, Austrian Academy of Sciences, Wohllebengasse 12-14, 1040 Wien, Austria.}
\email{peter.balazs@oeaw.ac.at}
\email{bayerd@kfs.oeaw.ac.at}
\address{$^*$ Department of Mathematics, University of Maragheh, P. O. Box 55181-83111, Maragheh, Iran.}
\email{asgharrahimi@yahoo.com}

\subjclass[2000]{Primary 42C40; Secondary 41A58, 47A58.}
\keywords{Frame, Continuous frame,  Measure space, Riesz type, Riesz
basis, Riesz frame, Wavelet frame, Short-time Fourier transform,
Gabor frame, Controlled frames, Compact operators, Trace-class
operators, Hilbert-Schmidt operators.}

\begin{abstract}
In this paper we examine the general theory of continuous frame multipliers in Hilbert space. These operators are a generalization of the widely used notion of (discrete) frame multipliers. Well-known examples include Anti-Wick operators, STFT multipliers or Calder\'on-Toeplitz operators. Due to the possible peculiarities of the underlying measure spaces, continuous frames do not behave quite as well as their discrete counterparts. Nonetheless, many results similar to the discrete case are proven for continuous frame multipliers as well, for instance compactness and Schatten class properties. Furthermore, the concepts of controlled and weighted frames are transferred to the continuous setting. 
\end{abstract}

\maketitle

\section{Introduction}

A discrete frame is a countable family of elements in a separable
Hilbert space which allows stable but not necessarily unique
decomposition of arbitrary elements into expansion of the frame
elements. The concept of generalization of frames was proposed by G.
Kaiser \cite{Gk} and independently by Ali, Antoine and Gazeau
\cite{Ali2} to a family indexed by some locally compact space endowed
with a Radon measure. These frames are known as continuous frames.
Gabardo and Han in \cite{Gb} called these frames \textit{Frames
associated with measurable spaces}, Askari-Hemmat, Dehghan and
Radjabalipour in \cite{Ra} called these frames \textit{generalized
frames}  and they are linked to
\textit{coherent states}  in mathematical physics \cite{Ali2}. For more studies, the
interested reader can also refer to \cite{Ali1, jpaxxl09, jpaxxl11, chui, For}.
\par
Bessel and frame multipliers were introduced by one of the authors
\cite{xxlmult1,peter2,peter3} for Hilbert spaces. For Bessel
sequences, the investigation of the operator $\mathbf{M}=\sum m_k
\langle f,\psi_{k}\rangle\varphi_{k}$, where the analysis
coefficients $\langle f,\psi_{k}\rangle$ are multiplied by a fixed
symbol $(m_k)$ before resynthesis (with $\varphi_{k}$), is very
natural. There are numerous applications of this
kind of operators. As a particular way to implement time-variant filters Gabor frame multipliers \cite{feic} are used, also known as Gabor filters \cite{matz}. Such operators find application in psychoacoustics \cite{xxllabmask1}, denoising \cite{majxxl10}, computational auditory scene analysis \cite{wanbro06}, virtual acoustics \cite{majxxl1} and seismic data analysis \cite{gigrheillama05}. On a more theoretical level
 Bessel multipliers of $p$-Bessel sequences in
Banach spaces are introduced in \cite{peterasghar}.
\par
Wavelet and Gabor frames are used very often in signal processing algorithm. Both systems are derived from a continuous frame transform.
For these two special systems continuous frame multipliers have been investigated as STFT-multipliers \cite{feic} or Anti-Wick operators \cite{coro05} respectively Calder\`on - Toeplitz operators \cite{now93, roch90}.
In this paper we investigate multipliers for continuous frames in the general setting, with some comments on the mentioned special cases in Section \ref{sec:STFTwavmult0}.

The paper is organized as follows. In Section 2, we collect a number of notions and preliminaries on continuous Bessel mappings and frames and their most basic properties and present some well-known examples. 
In Section 3, we define continuous Bessel and frame multipliers as generalizations of discrete Bessel and frame multipliers, develop their theory and prove a number of statements on the compactness of multipliers as well as on mapping properties with respect to Schatten classes. We also investigate perturbation results and the continuous dependence of the multiplier on the symbol and on the analysis and synthesis frames. 
We also look at the particular instances of STFT and Wavelet multipliers, the latter are known as Calder\'on-Toeplitz operators, and compare our results to existing ones.
Section 4 generalizes the concepts of controlled and weighted frames to the continuous setting.

\section{Preliminaries}

\subsection{Operator theory and functional analysis}

Throughout this paper, $\h$ (respectively $\h_1, \h_2$) will be
complex Hilbert spaces, with inner product $\langle x, y\rangle$,
linear in the first and conjugate linear in the second coordinate
and norm $\|x\| = \sqrt{\langle x, x  \rangle}$ for $x,y \in \h$.
 Let $\mathcal{B}(\h_1,\h_2)$ be the set of all bounded
linear operators from $\h_1$ to $\h_2$. 
This set is a Banach space with
norm $\|T\|=\sup_{\|x\|=1}\|Tx\|$. We define $GL(\h_1,\h_2)$ as the set
of all bounded linear operators with bounded inverse. If $\h_1 = \h_2 = \h$, we simply write $\mathcal{B}(\h)$ and $GL(\h)$. 
By $(e_n)$ we always denote an orthonormal basis for a Hilbert space. 
A map $\Psi:\h \times
\h\rightarrow \mathbb{C}$ is a sesquilinear form if it is linear in
the first variable and conjugate-linear in the second. For such a
map, we have the following assertion.
\begin{thm}\cite{Mu}\label{murphy}
 Let $\Psi$  be a bounded sesquilinear form on a Hilbert space $\h$.
Then there is a unique operator $u$ on $\h$ such that
\[
\Psi(x,y)=\langle u(x),y\rangle \quad (x,y\in \h). \]
 Moreover,
$\|u\|=\|\Psi\|$.
\end{thm}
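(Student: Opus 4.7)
The plan is to reduce the statement to the Riesz representation theorem for bounded linear functionals, applied one variable at a time. For a fixed $x\in\h$, the map $y\mapsto \overline{\Psi(x,y)}$ is linear in $y$ (because $\Psi$ is conjugate-linear in the second slot), and it is bounded by $\|\Psi\|\cdot\|x\|$. Hence by the Riesz representation theorem there exists a unique vector, which I will call $u(x)$, such that $\overline{\Psi(x,y)}=\langle y,u(x)\rangle$ for all $y$, equivalently $\Psi(x,y)=\langle u(x),y\rangle$. This defines $u$ as a map $\h\to\h$, and also gives the uniqueness of $u$ in the theorem: if $v$ is another operator with $\Psi(x,y)=\langle v(x),y\rangle$, then $\langle u(x)-v(x),y\rangle=0$ for all $y$, so $u(x)=v(x)$.

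Next I would verify that $u$ is linear. Given $x_1,x_2\in\h$ and $\alpha,\beta\in\mathbb{C}$, the linearity of $\Psi$ in the first slot gives
\[
\langle u(\alpha x_1+\beta x_2),y\rangle=\Psi(\alpha x_1+\beta x_2,y)=\alpha\langle u(x_1),y\rangle+\beta\langle u(x_2),y\rangle=\langle \alpha u(x_1)+\beta u(x_2),y\rangle
\]
for every $y$, and the vector on the left of the inner product is therefore determined uniquely.

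Finally I would compute both norm inequalities. Substituting $y=u(x)$ in the defining identity and using the boundedness of $\Psi$ gives $\|u(x)\|^{2}=\Psi(x,u(x))\le\|\Psi\|\,\|x\|\,\|u(x)\|$, hence $\|u(x)\|\le\|\Psi\|\,\|x\|$; in particular $u\in\mathcal{B}(\h)$ and $\|u\|\le\|\Psi\|$. Conversely, the Cauchy--Schwarz inequality yields $|\Psi(x,y)|=|\langle u(x),y\rangle|\le\|u\|\,\|x\|\,\|y\|$, so $\|\Psi\|\le\|u\|$, and equality of the norms follows.

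There is no real obstacle here: the only point to be careful about is the placement of the complex conjugate when invoking Riesz, since the convention in the paper is that the inner product is linear in the first argument and conjugate linear in the second, so one must represent $\overline{\Psi(x,\cdot)}$ rather than $\Psi(x,\cdot)$ itself as an inner product.
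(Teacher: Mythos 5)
The paper does not prove this statement; it is quoted from Murphy's book \cite{Mu} without proof. Your argument is the standard one (fix $x$, apply the Riesz representation theorem to the bounded linear functional $y\mapsto\overline{\Psi(x,y)}$, then check linearity and the two norm inequalities), it is correct, and your care with the conjugate — representing $\overline{\Psi(x,\cdot)}$ rather than $\Psi(x,\cdot)$, since the paper's inner product is conjugate-linear in the second slot — is exactly the right point to watch. The only cosmetic remark: in the norm estimate $\|u(x)\|^{2}\le\|\Psi\|\,\|x\|\,\|u(x)\|$ you should note that the division by $\|u(x)\|$ is harmless when $u(x)=0$, but this is trivial.
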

A bounded operator $T$ is called positive (respectively
non-negative), if $\langle Tf,f\rangle>0$ for all $f\neq0$ (
respectively $\langle Tf,f\rangle\geq0$ for all $f\in\h$). We say
$S> T$ if $S-T>0$ (respectively $S\geq T$, if $S-T\geq0$). For a
non-negative operator $T$, there exists a unique non-negative
operator $S$ on $\h$ such that $S^2=T$ and $S$ commutes with every
operator that commutes with $T$. See e.g. \cite{conw1} or \cite{gogoka03} for good accounts of elementary operator theory.

A linear operator $T$ from the Banach space $X$ into the Banach space $Y$ is called
compact if the image under $T$ of the closed unit ball in $X$ is a relatively compact subset of $Y$, or, equivalently, if the image of any bounded sequence contains a convergent subsequence. A well-known characterization of compact operators is the following:
\begin{lem} \label{weakly}\cite{conw1}
Let $X,Y$ be Banach spaces. A bounded operator $T:X\rightarrow Y$
is compact if and only if $\|Tx_{n}\|\longrightarrow 0$ whenever
$x_{n}\longrightarrow 0$ weakly in $X$.
\end{lem}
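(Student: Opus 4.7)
The plan is to prove the two implications separately. The forward direction is a contradiction argument coupled with the weak-to-weak continuity of bounded operators, and the reverse direction extracts a weakly convergent subsequence out of every bounded one.

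For the forward implication, I assume $T$ is compact and $x_n \rightharpoonup 0$ weakly in $X$. First I invoke the uniform boundedness principle (applied to $\set{x_n}$ viewed as functionals on $X^*$) to conclude that $(x_n)$ is norm bounded. Suppose, toward a contradiction, that $\|Tx_n\| \not\to 0$; then there exist $\eps > 0$ and a subsequence $(x_{n_k})$ with $\|Tx_{n_k}\| \geq \eps$. Since $(x_{n_k})$ is bounded and $T$ is compact, a further subsequence $(Tx_{n_{k_j}})$ converges in norm to some $y \in Y$, necessarily with $\|y\| \geq \eps$. On the other hand, for any $\varphi \in Y^*$, $\varphi(Tx_n) = (T^*\varphi)(x_n) \to 0$, so $Tx_n \rightharpoonup 0$ weakly in $Y$. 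A norm-convergent subsequence of a weakly null sequence must tend weakly (and hence its limit must be $0$), forcing $y = 0$, a contradiction.

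For the reverse implication, I take an arbitrary bounded sequence $(x_n)$ in $X$ and must produce a norm-convergent subsequence of $(Tx_n)$. By passing to the bounded set $\overline{\mathrm{span}}\set{x_n}$, which is a separable closed subspace, and (in the context of this paper, where $X$ is a Hilbert space) applying the Banach-Alaoglu theorem together with reflexivity, I extract a subsequence $x_{n_k} \rightharpoonup x$ weakly in $X$. Then $x_{n_k} - x \rightharpoonup 0$ weakly, so the hypothesis yields $\|Tx_{n_k} - Tx\| \to 0$, giving the desired convergent subsequence.

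The main subtlety, and the only real obstacle, lies in the reverse direction: the equivalence is \emph{not} valid for an arbitrary Banach space $X$, as Schur's property of $\ell^1$ shows that every bounded operator from $\ell^1$ maps weakly null sequences to norm null sequences without being compact. The statement therefore tacitly uses that bounded sequences in $X$ admit weakly convergent subsequences, which holds whenever $X$ is reflexive. Since the present paper works exclusively in Hilbert spaces, this hypothesis is automatic and no further comment is needed, but I would flag this caveat when the lemma is invoked for a general Banach $X$.
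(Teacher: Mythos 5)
Your proof is correct. Note that the paper itself offers no argument for this lemma at all: it is quoted from Conway's book, where the relevant proposition is stated for Hilbert spaces. So there is nothing to compare line by line; what matters is whether your argument is sound, and it is. The forward direction (compact $\Rightarrow$ completely continuous) is the standard subsequence/uniqueness-of-weak-limits argument and is valid for arbitrary Banach spaces $X,Y$. Your reverse direction is also the standard one, and, crucially, you have put your finger on the real issue: as stated, with $X,Y$ arbitrary Banach spaces, the lemma is \emph{false} in the ``if'' direction. The identity on $\ell^1$ maps weakly null sequences to norm null sequences (Schur property) without being compact, so the reverse implication genuinely requires that bounded sequences in $X$ admit weakly convergent subsequences, i.e.\ (by Eberlein--\v{S}mulian) that $X$ be reflexive. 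This hypothesis is missing from the statement but is satisfied everywhere the lemma is invoked in the paper: the forward direction is used (contrapositively) to show that nonzero multiplication operators on $L^2(\mathbb{R}^d)$ are not compact, and the reverse direction is used to show that $D_m\circ T_F^{\ast}$ is compact, with domain a Hilbert space in both cases. Your proposal is therefore not only a complete proof where the paper has only a citation, but it also corrects the over-general formulation; the caveat you flag deserves to be recorded alongside the lemma.
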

For any compact operator $T:\h \to \K$, the operator $T^{\ast}T:\h \to \h$ is compact and non-negative. The unique non-negative operator $S$ such that $S^2 = T^{\ast}T$ is also compact. The eigenvalues of $S$ are called the singular values of $T$. They form a non-increasing sequence of non-negative numbers that either consists of only finitely many non-zero terms or converges to zero. If the sequence of singular values $(s_n)$ is in $\ell^p$, $1 \leq p < \infty$, then $T$ belongs to the Schatten p-class $\mathcal{S}_p(\h)$. In particular, if $\sum |s_n | < \infty$, then $T$ is a trace class operator; if $\sum | s_n |^2 < \infty$, then $T$ is a Hilbert-Schmidt operator. A good source for information on Schatten class operators is e.g. \cite{zh07}.
\\

We recall the definition of a discrete frame.
\begin{defn}
A family $(f_n)\subseteq\h$ is a \emph{frame} for $\h$ if there exist
constants $A>0$ and $B<\infty$ such that \[
A\|f\|^2\leq\sum_n|\langle f,f_n\rangle|^2\leq B\|f\|^2
\] for all $f\in\h$.
If $A=B$, then it is called a tight frame.

\end{defn}

\subsection{Continuous frames}

\begin{defn}
Let $\h $ be a complex Hilbert space and $(\Omega ,\mu)$ be a
measure space with positive measure $\mu.$ The mapping
$F:\Omega\to\h$ is called a \emph{continuous
frame} with respect to $(\Omega ,\mu)$, if\\
\begin{enumerate}
\item  $F$ is weakly-measurable, i.e., for all $f\in \h$,
$\omega\to\langle f,F({\omega})\rangle$
is a measurable function on $\Omega$; \\
\item  there exist constants $A, B> 0$ such that
\[
\label{deframe}
A\|f\|^{2}\leq \int_{\Omega}|\langle
f,F({\omega})\rangle|^{2}\,d\mu(\omega) \leq B\|f\|^{2}, \quad (
f\in \h).
\]
\end{enumerate}
The constants $A$ and $B$ are called \emph{continuous
frame bounds}. If $A=B$, then $F$ is called a \emph{tight} continuous frame, if $A = B = 1$ a {\em Parseval} frame. 
The
mapping $F$ is called {\em Bessel mapping} or shorter \emph{Bessel} if only the righthand inequality in
(\ref{deframe}) holds. In this case, $B$ is called the \emph{Bessel
constant} or \emph{Bessel bound}.
\end{defn}
If $\Omega=\mathbb{N}$ and $\mu$ is counting measure then $F$ is a
discrete frame. In this sense continuous frames are the more general
setting.

The first inequality in (\ref{deframe}), shows that $F$ is
complete, i.e.,
$$\overline{\textrm{span}}\{F({\omega})\}_{\omega\in\Omega}=\h.$$

It is well-known that discrete Bessel sequences in a Hilbert space are norm bounded above: if
\[
\sum_{n} | \langle f, f_n \rangle |^2 \leq B \| f \|^2
\]
for all $f\in \h$, then
\[
\| f_n \| \leq \sqrt{B}
\]
for all $n$. For continuous Bessel mappings, however, this is not necessary. Consider the following example.
\begin{exa}\label{example_1}
Take an (essentially) unbounded (Lebesgue) measurable function
$a:\Real \to \Comp$ such that $a \in L^2(\Real) \setminus
L^{\infty}(\Real)$. It is easy to see that such functions indeed
exist; consider for example the function
\[
b(x) := 
\begin{cases}
\frac{1}{\sqrt{|x|}}, & \mbox{ if } 0 < |x| < 1, \\
\frac{1}{|x|^2}, & \mbox{ if } |x| \geq 1, \\
0, & \mbox{ if } x=0.
\end{cases}
\]
This function is clearly in $L^1(\Real) \setminus L^{\infty}(\Real)$ and, furthermore, $b(x) \geq 0$ for all $x \in \Real$. Now take $a(x) := \sqrt{b(x)}$. Choose a fixed vector $h \in\h$, $h \neq 0$. Then the
mapping
\[
F: \Real \to\h, \quad \quad \omega \mapsto F(\omega) :=
a(\omega)\cdot h
\]
is weakly (Lebesgue) measurable and a continuous Bessel mapping, since
\begin{align*}
\int_{\Real} | \langle f, F(\omega) \rangle |^2\,d\mu(\omega) & = \int_{\Real} |a(\omega)|^2 | \langle f, h \rangle |^2\,d\mu(\omega) \\
& = | \langle f, h \rangle |^2 \int_{\Real} |a(\omega)|^2\,d\mu(\omega) \\
& \leq \| h \|^2 \|a\|_{L^2(\Real)}^2 \| f \|^2
\end{align*}
for all $f\in \h$, but
\[
\| F(\omega) \| = \| a(\omega) h \| = |a(\omega)| \| h \|
\]
is unbounded, since $a$ is unbounded.\hfill$\triangle$
\end{exa}

Even continuous frames need not necessarily be norm bounded.

\begin{exa}\label{example_2}
Let $F:\Real \to \h$ be a norm unbounded continuous Bessel mapping with Bessel constant $B_F$, as in the previous example. Let $G:\Real \to\h$ be a norm bounded continuous frame (for example a continuous wavelet or Gabor frame, cf. Section \ref{gaborandwavelet}) with continuous frame bounds $0 < A_G \leq B_G$ and norm bound $M > 0$, i.e. $\| G(\omega) \|\leq M$ for a.e. $\omega \in\Real$.\\
Then $G + \varepsilon F$ is a norm unbounded continuous frame, for all sufficiently small $\varepsilon > 0$.\\
To see this, first note that it is obvious that the mapping $G +
\varepsilon F:\Real \to\h$ is weakly measurable for any choice of
$\varepsilon > 0$. It satisfies the upper frame bound, since
\begin{align*}
\int_{\Real} | \langle f, G(\omega) & + \varepsilon F(\omega) \rangle |^2\,d\mu(\omega) \\
& \leq \int_{\Real} \Big( | \langle f, G(\omega) \rangle | + \varepsilon | \langle f, F(\omega) \rangle | \Big)^2 \,d\mu(\omega) \\
& \leq 2\cdot \int_{\Real} \Big( | \langle f, G(\omega) \rangle |^2 + \varepsilon^2 | \langle f, F(\omega) \rangle |^2 \Big) \,d\mu(\omega) \\
& \leq 2\cdot (B_G + \varepsilon^2 B_F)\cdot \| f \|^2.
\end{align*}

For the lower frame bound, observe that
\begin{align*}
\Big( \int_{\Real} | \langle f, G(\omega) & + \varepsilon F(\omega) \rangle |^2\,d\mu(\omega) \Big)^{1/2} \\
& \geq \Big( \int_{\Real} | \langle f, G(\omega) \rangle |^2\,d\mu(\omega) \Big)^{1/2} - \Big( \int_{\Real} \varepsilon^2 | \langle f, F(\omega) \rangle |^2\,d\mu(\omega) \Big)^{1/2} \\
& \geq \sqrt{A_G} \|f\| - \varepsilon \sqrt{B_F}\|f\| \\
& = (\sqrt{A_G} - \varepsilon\sqrt{B_F})\|f\|.
\end{align*}
Now choose $\varepsilon < \sqrt{\frac{A_G}{B_F}}$, then $\sqrt{A_G} - \varepsilon\sqrt{B_F} > 0$, and the lower frame bound is established.

This continuous frame is, however, not norm bounded, since
\begin{align*}
\| G(\omega) + \varepsilon F(\omega) \| \geq \varepsilon \| F(\omega) \| - \| G(\omega) \| \geq \varepsilon \| F(\omega) \| - M;
\end{align*}
by $F$ being unbounded, this is unbounded as well.\hfill$\triangle$
\end{exa}

The construction in the last two examples depends crucially on the existence of an unbounded square-integrable function or, equivalently, on the existence of an unbounded integrable function. It can be generalized to the following theorem:

\begin{thm}
Let $(\Omega, \mu)$ be a measure space such that $L^1(\Omega, \mu) \nsubseteq L^{\infty}(\Omega, \mu)$, i.e. there exist unbounded integrable functions. Then the following holds:\\
If there exist any continuous frames at all with respect to $(\Omega, \mu)$, then there are also norm-unbounded ones.
\end{thm}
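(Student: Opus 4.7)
The plan is to generalize the construction from Example \ref{example_2} to the abstract setting of $(\Omega,\mu)$. Assume a continuous frame $G:\Omega\to\h$ exists. If $G$ itself is already norm-unbounded, there is nothing to prove, so we may assume that $\|G(\omega)\|\leq M$ for $\mu$-a.e.\ $\omega\in\Omega$, with continuous frame bounds $0<A_G\leq B_G$. The idea is then to manufacture a norm-unbounded Bessel mapping $F$ on $(\Omega,\mu)$ and add a sufficiently small multiple of it to $G$.

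First I would exploit the hypothesis $L^1(\Omega,\mu)\nsubseteq L^\infty(\Omega,\mu)$ to produce an (essentially) unbounded function $a\in L^2(\Omega,\mu)$. Concretely, pick $b\in L^1(\Omega,\mu)\setminus L^\infty(\Omega,\mu)$; replacing $b$ by $|b|$ we may assume $b\geq 0$, and then $a:=\sqrt{b}$ satisfies $\int_\Omega |a|^2\,d\mu=\int_\Omega b\,d\mu<\infty$ while $a$ remains essentially unbounded since $b$ is. Fix any $h\in\h$ with $h\neq 0$ and set $F(\omega):=a(\omega)\,h$. A direct computation as in Example \ref{example_1} shows that $F$ is weakly measurable and
\[
\int_\Omega |\langle f, F(\omega)\rangle|^2\,d\mu(\omega) \;=\; |\langle f,h\rangle|^2\,\|a\|_{L^2}^2 \;\leq\; \|h\|^2\,\|a\|_{L^2}^2\,\|f\|^2,
\]
so $F$ is a Bessel mapping with bound $B_F:=\|h\|^2\|a\|_{L^2}^2$; moreover $\|F(\omega)\|=|a(\omega)|\|h\|$ is essentially unbounded.

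Next I would consider $H_\varepsilon:=G+\varepsilon F$ for a small parameter $\varepsilon>0$ and repeat the two inequalities from Example \ref{example_2} verbatim: the upper bound follows from $(x+y)^2\leq 2(x^2+y^2)$ applied under the integral, giving Bessel bound $2(B_G+\varepsilon^2 B_F)$, and the lower bound follows from Minkowski's inequality in $L^2(\Omega,\mu)$, yielding
\[
\Bigl(\int_\Omega |\langle f,H_\varepsilon(\omega)\rangle|^2 d\mu\Bigr)^{1/2}\geq (\sqrt{A_G}-\varepsilon\sqrt{B_F})\|f\|.
\]
Choosing $\varepsilon<\sqrt{A_G/B_F}$ gives a positive lower frame bound, so $H_\varepsilon$ is a continuous frame. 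The reverse triangle inequality $\|H_\varepsilon(\omega)\|\geq \varepsilon\|F(\omega)\|-M$ together with the essential unboundedness of $\|F(\omega)\|$ shows that $H_\varepsilon$ is norm-unbounded.

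The only nontrivial point is the initial passage from an unbounded $L^1$ function to an unbounded $L^2$ function; once that is in hand, the rest is a transcription of Example \ref{example_2} into the abstract $(\Omega,\mu)$ language, using only Minkowski's inequality, the triangle inequality, and the assumed frame bounds for $G$. In particular, no structural assumption on $\Omega$ beyond the existence of an unbounded integrable function is needed.
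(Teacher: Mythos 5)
Your proposal is correct and follows essentially the same route as the paper: take $b\in L^1(\Omega,\mu)\setminus L^\infty(\Omega,\mu)$, set $a:=\sqrt{|b|}$ to get an unbounded $L^2$ function, form the norm-unbounded Bessel mapping $F(\omega)=a(\omega)h$ as in Example \ref{example_1}, and perturb a norm-bounded frame $G$ by $\varepsilon F$ as in Example \ref{example_2}. The only (harmless) difference is that you explicitly dispose of the case where $G$ is already norm-unbounded, which the paper leaves implicit.
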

\begin{proof}
Fix a vector $h \in \h$, $h \neq 0$. Pick a function $b:\Omega \to \mathbb{C}$, $b \in L^1(\Omega, \mu) \setminus L^{\infty}(\Omega, \mu)$. Then $a := \sqrt{| b|}$ is a function in $L^2(\Omega,\mu) \setminus L^{\infty}(\Omega, \mu)$, i.e. an unbounded square-integrable function. As in Example \ref{example_1}, the mapping $F:\Omega \to \h$, $F(\omega) = a(\omega)\cdot h$, is an norm-unbounded continuous Bessel mapping. If there exists a norm-bounded continuous frame $G:\Omega \to \h$, then one can show as in Example \ref{example_2} that the mapping $G + \epsilon F$ is a norm-unbounded continuous frame, for sufficiently small $\epsilon$. 
\end{proof}

Concerning the existence of continuous frames, we have the following result:

\begin{thm}
Let $(\Omega, \mu)$ be a $\sigma$-finite measure space. Then there exists a continuous tight frame $F:\Omega \to \h$ with respect to $(\Omega, \mu)$.
\end{thm}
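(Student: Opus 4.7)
My strategy is to construct $F$ as a piecewise-constant mapping that matches an orthonormal basis of $\h$ to a countable partition of $\Omega$ into positive-measure pieces. Fix an orthonormal basis $(e_n)_{n\in\NN}$ of $\h$ (the paper's standing notation $(e_n)$ tacitly assumes separability). Since $(\Omega,\mu)$ is $\sigma$-finite, one may write $\Omega = \bigcup_{k} A_k$ with $\mu(A_k) < \infty$; from this I would extract, by discarding null pieces and splitting non-atomic parts of positive measure whenever needed, pairwise disjoint measurable sets $(\Omega_n)_{n\in\NN}$ with $0 < \mu(\Omega_n) < \infty$ and $\bigsqcup_n \Omega_n = \Omega$ up to a $\mu$-null set.

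Having fixed such a partition, I define
\[
F(\omega) := \frac{1}{\sqrt{\mu(\Omega_n)}}\, e_n \qquad \text{for } \omega \in \Omega_n,
\]
and $F(\omega) := 0$ off $\bigcup_n \Omega_n$. For each $f \in \h$ the scalar function
\[
\omega \mapsto \langle f, F(\omega)\rangle = \sum_{n} \frac{\langle f, e_n\rangle}{\sqrt{\mu(\Omega_n)}}\, \chi_{\Omega_n}(\omega)
\]
is a countable linear combination of indicators of measurable sets, so $F$ is weakly measurable. Applying Tonelli's theorem to the nonnegative integrand yields
\[
\int_{\Omega} |\langle f, F(\omega)\rangle|^2\, d\mu(\omega) = \sum_{n} \frac{|\langle f, e_n\rangle|^2}{\mu(\Omega_n)} \cdot \mu(\Omega_n) = \sum_{n} |\langle f, e_n\rangle|^2 = \|f\|^2,
\]
which is exactly the Parseval (hence tight, with $A = B = 1$) continuous frame identity.

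The only delicate point is the partition step: one needs a countable family of positive-measure pieces of $\Omega$, one per basis vector. This is routine in the generic situation (either $\h$ is finite-dimensional, in which case only finitely many $\Omega_n$'s are needed; or $\mu$ admits a non-atomic component, in which case splitting is classical; or $\mu$ is purely atomic with at least $\dim\h$ atoms, which can be used directly). The only obstruction is the degenerate case in which $\h$ is infinite-dimensional and $\mu$ is purely atomic with only finitely many atoms, where no continuous frame exists at all; I would read the statement as implicitly excluding this pathology, since otherwise the hypothesis of $\sigma$-finiteness would need strengthening.
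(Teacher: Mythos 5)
Your construction is exactly the paper's: take a countable partition of $\Omega$ into pieces $\Omega_n$ of finite positive measure, set $F(\omega) = \mu(\Omega_n)^{-1/2} e_n$ on $\Omega_n$, and verify the Parseval identity by Tonelli. The one real difference is your reading of the statement. You treat $\h$ as given in advance, which forces you to refine the $\sigma$-finite decomposition until the number of positive-measure pieces matches $\dim \h$, and leads you to the (correctly diagnosed) degenerate case of an infinite-dimensional $\h$ over a purely atomic measure with finitely many atoms, which you can only exclude by fiat. The paper reads the theorem the other way around: it takes the $\sigma$-finite decomposition as it comes, discards null pieces, and then \emph{chooses} $\h$ to fit --- an infinite-dimensional separable space if there are infinitely many pieces, an $N$-dimensional one if there are $N$. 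Under that reading no splitting of non-atomic parts is needed and the pathology never arises. Your extra machinery (Sierpi\'nski-type splitting of a non-atomic component, counting atoms) is sound where you invoke it, but it is doing work the theorem does not require; conversely, if the intended claim really were ``for every fixed infinite-dimensional $\h$,'' the statement would be false as you note, so the paper's reading is the only tenable one and your caveat should be promoted from an aside to the observation that fixes the interpretation.
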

\begin{proof}
Since $\Omega$ is $\sigma$-finite, it can be written as a disjoint union $\Omega = \bigcup \Omega_k$ of countably many subsets $\Omega_k \subseteq \Omega$ such that $\mu(\Omega_k) < \infty$ for all $k$. Without loss of generality assume that $\mu(\Omega_k) > 0$ for all $k$. If there are infinitely many such subsets $\Omega_k$, $k \in \mathbb{N}$, then let $(e_k)_{k\in\mathbb{N}}$ be an orthonormal basis of an infinite-dimensional separable Hilbert space $\h$. Define the function $F:\Omega \to \h$ by
\[
\omega \mapsto F(\omega) := \frac{1}{\sqrt{\mu(\Omega_k)}} e_k, \quad \mbox{ for }\omega \in \Omega_k.
\]
Then, for all $f \in \h$,
\begin{align*}
\int_{\Omega} \left| \langle f, F(\omega) \rangle \right|^2\,d\mu(\omega) & = \sum_{k} \int_{\Omega_k}\left| \langle f, F(\omega) \rangle \right|^2\,d\mu(\omega) \\
& = \sum_k \left| \langle f, e_k \rangle \right|^2 \frac{1}{\mu(\Omega_k)} \mu(\Omega_k) \\
& = \| f \|^2,
\end{align*}
thus $F$ is a continuous tight frame with frame bound $1$. If there are only finitely many $\Omega_k$, $k = 1, \ldots, N$, then take for $\h$ an $N$-dimensional Hilbert space instead and proceed analogously.
\end{proof}

For the convenience of the reader, we shortly repeat some basic facts and notions on continuous frames. Details may be found for example in \cite{Ali2} or \cite{RaNaDe}.

Let $F$ be a continuous frame with respect to $(\Omega ,\mu)$,
then the mapping
$$\Psi : \h\times\h \to \mathbb{C}$$ defined by
$$\Psi(f,g)= \int_{\Omega}\langle f,F({\omega})\rangle\langle
F({\omega}),g\rangle \,d\mu(\omega)$$ is well defined, sesquilinear
and bounded. By Cauchy-Schwarz's inequality we get
\begin{eqnarray*}|\Psi(f,g)|&\leq&
\int_{\Omega}|\langle f,F(\omega)\rangle\langle
F(\omega),g\rangle|\, d\mu(\omega)\\
&\leq& \left(\int_{\Omega}|\langle f,F(\omega)\rangle|^{2}
\,d\mu(\omega)\right)^{\frac{1}{2}}\left(\int_{\Omega}|\langle
F(\omega), g \rangle|^{2}\, d\mu(\omega)\right)^{\frac{1}{2}}\\
&\leq& B\| f\|\| g\|.
\end{eqnarray*}

Hence $\|\Psi\|\leq B.$ By Theorem \ref{murphy} there exists a
unique operator $S_{F}: \h\to\h$ such that
$$\Psi(f,g)=\langle S_{F}f, g\rangle,\quad ( f,g\in\h )$$ and moreover
$\|\Psi\|=\|S\|.$\\
Since $\langle S_{F}f,f\rangle=\int_{\Omega}|\langle
f,F(\omega)\rangle |^{2}\,d\mu(\omega)$, $S_{F}$ is positive and
$AI\leq S_{F}\leq BI$. Hence $S_{F}$ is invertible, positive and $\frac{1}{B} I\leq S^{-1}_{F}\leq \frac{1}{A} I$.
We call
$S_{F}$ the continuous frame operator of $F$ and we use the
notation $S_{F}f=\int_{\Omega}\langle f,F(\omega)\rangle F(\omega)
\,d\mu(\omega)$, which is valid in the weak sense.
Thus, every $f\in\h$ has the (weak) representations
$$f=S_{F}^{-1}S_{F}f=\int_{\Omega}\langle f, F(\omega)\rangle
S_{F}^{-1}F(\omega)\,d\mu(\omega)$$$$f=S_{F}S_{F}^{-1}f=\int_{\Omega}\langle
f, S_{F}^{-1}F(\omega)\rangle F(\omega)\,d\mu(\omega).$$
\begin{thm}\label{TF}\cite{RaNaDe}
Let $(\Omega, \mu)$ be a measure space and let $F$ be a Bessel
mapping from $\Omega$ to $\h.$ Then the operator
$T_{F}:L^{2}(\Omega, \mu)\to\h$ weakly defined  by $$\langle
T_{F}\varphi, h\rangle=\int_{\Omega}\varphi(\omega)\langle
F(\omega),h \rangle \,d\mu(\omega),\quad ( h\in\h )$$ is well
defined, linear, bounded and its adjoint is given by $$ T_{F}^{*}:
\h\to L^{2}(\Omega, \mu),\quad (T_{F}^{*}h)(\omega)=\langle h,
F(\omega)\rangle,\quad ( \omega\in\Omega ).$$ The operator $T_{F}$
is called the \textit{synthesis operator} and
$T_{F}^{*}$ is called the \textit{analysis operator} of $F$.
\end{thm}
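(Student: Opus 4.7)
My plan is to bypass the weak definition of $T_F$ and construct it directly as the adjoint of the more transparent operator
\[
U : \h \to L^2(\Omega,\mu), \qquad (Uh)(\omega) := \langle h, F(\omega)\rangle.
\]
Once $U$ is known to be a well-defined bounded operator, setting $T_F := U^*$ immediately yields boundedness, well-definedness, linearity, and the adjoint formula simultaneously, with the weak identity in the theorem recovered as a consequence.

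First, I would verify that $U$ is well-defined: the scalar function $Uh$ is measurable because $F$ is weakly measurable, and it lies in $L^2(\Omega,\mu)$ because the Bessel upper bound gives
\[
\|Uh\|_{L^2}^2 = \int_\Omega \bigl|\langle h, F(\omega)\rangle\bigr|^2\, d\mu(\omega) \leq B \|h\|^2.
\]
Linearity in $h$ is automatic from linearity of the inner product in its first slot, so $U$ is bounded with $\|U\| \leq \sqrt{B}$.

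Next, I would set $T_F := U^* \in \mathcal{B}(L^2(\Omega,\mu), \h)$ and compute its weak action. For any $\varphi \in L^2(\Omega,\mu)$ and $h \in \h$,
\[
\langle T_F \varphi, h\rangle_\h = \langle \varphi, Uh\rangle_{L^2} = \int_\Omega \varphi(\omega)\, \overline{\langle h, F(\omega)\rangle}\, d\mu(\omega) = \int_\Omega \varphi(\omega)\, \langle F(\omega), h\rangle\, d\mu(\omega),
\]
which is precisely the weak definition stated in the theorem. Uniqueness of $T_F\varphi$ follows because its inner product with every $h \in \h$ is prescribed. Finally, taking adjoints gives $T_F^* = (U^*)^* = U$, so $(T_F^* h)(\omega) = \langle h, F(\omega)\rangle$ as claimed.

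The one nontrivial ingredient — and the place where the Bessel hypothesis is essential — is the step showing $Uh \in L^2(\Omega,\mu)$; everything else is a formal manipulation of the adjoint identity. An alternative route, staying closer to the wording of the theorem, would be to check that $(\varphi, h) \mapsto \int_\Omega \varphi(\omega)\langle F(\omega), h\rangle\, d\mu(\omega)$ is a bounded sesquilinear form on $L^2(\Omega,\mu) \times \h$ (using Cauchy--Schwarz together with the same Bessel estimate) and then invoke Riesz representation for the conjugate-linear map $h \mapsto \Psi(\varphi, h)$ to extract $T_F\varphi$; but the $U^*$ construction is cleaner and delivers the adjoint formula as a byproduct.
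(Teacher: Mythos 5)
The paper offers no proof of this theorem --- it is imported verbatim from \cite{RaNaDe} as a cited result --- so there is no internal argument to compare yours against; I can only judge the proposal on its own merits, and it is correct and complete. Weak measurability of $F$ gives measurability of $Uh = \langle h, F(\cdot)\rangle$, the Bessel bound gives $\|Uh\|_{L^2}^2 \le B\|h\|^2$, so $U \in \mathcal{B}(\h, L^2(\Omega,\mu))$ with $\|U\| \le \sqrt{B}$; then $T_F := U^*$ exists, is linear and bounded, satisfies $\langle T_F\varphi, h\rangle = \langle \varphi, Uh\rangle_{L^2} = \int_\Omega \varphi(\omega)\langle F(\omega), h\rangle\,d\mu(\omega)$ (which is the weak defining identity, with uniqueness of $T_F\varphi$ automatic since its inner products against all $h$ are prescribed), and $T_F^* = U$ recovers the stated analysis operator. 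Two minor remarks. First, the identification of the defining integral with the $L^2$ inner product tacitly uses absolute convergence; this is harmless since Cauchy--Schwarz plus the Bessel bound give $\int_\Omega |\varphi(\omega)|\,|\langle F(\omega), h\rangle|\,d\mu(\omega) \le \|\varphi\|_{2}\sqrt{B}\,\|h\|$, but it is worth a line. Second, the alternative you sketch at the end --- bounding the sesquilinear form $(\varphi,h)\mapsto\int_\Omega \varphi(\omega)\langle F(\omega), h\rangle\,d\mu(\omega)$ and invoking a Riesz-representation argument as in Theorem \ref{murphy} --- is precisely the pattern this paper uses for the frame operator $S_F$, so either presentation is consistent with the paper's conventions; your $U^*$ construction has the advantage of producing the adjoint formula for free rather than as a separate computation.
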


Such as in the discrete case we have the next proposition.
\begin{prop}\cite{RaNaDe}
Let $F:\Omega\to\h$ be a Bessel function with respect to
$(\Omega,\mu)$. By the above notations $S_{F}=T_{F}T_{F}^{*}.$
\end{prop}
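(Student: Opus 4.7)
The plan is to verify the identity $S_F = T_F T_F^*$ weakly, by showing that $\langle T_F T_F^* f, g\rangle = \langle S_F f, g\rangle$ for all $f,g\in\h$, and to invoke uniqueness of the operator representing a bounded sesquilinear form (Theorem \ref{murphy}). All the ingredients are already collected in the excerpt: the sesquilinear form $\Psi$ defining $S_F$, and the explicit formula for $T_F^*$ from Theorem \ref{TF}.

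First I would compute, for arbitrary $f,g \in \h$, the inner product $\langle T_F T_F^* f, g\rangle$ using the adjoint relation in $\h$:
\[
\langle T_F T_F^* f, g\rangle_{\h} = \langle T_F^* f, T_F^* g\rangle_{L^2(\Omega,\mu)}.
\]
By Theorem \ref{TF}, $(T_F^* f)(\omega) = \langle f, F(\omega)\rangle$ and $(T_F^* g)(\omega) = \langle g, F(\omega)\rangle$, both of which are elements of $L^2(\Omega,\mu)$ because $F$ is Bessel. Plugging in the definition of the $L^2$ inner product gives
\[
\langle T_F^* f, T_F^* g\rangle_{L^2(\Omega,\mu)} = \int_\Omega \langle f, F(\omega)\rangle \,\overline{\langle g, F(\omega)\rangle}\,d\mu(\omega) = \int_\Omega \langle f, F(\omega)\rangle\langle F(\omega), g\rangle\,d\mu(\omega),
\]
where I used the conjugate symmetry $\overline{\langle g, F(\omega)\rangle} = \langle F(\omega), g\rangle$.

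The right-hand side is precisely $\Psi(f,g) = \langle S_F f, g\rangle$ by the construction of $S_F$ given just before Theorem \ref{TF}. Hence $\langle T_F T_F^* f, g\rangle = \langle S_F f, g\rangle$ for all $f,g \in \h$, and since both operators are bounded and linear on $\h$, they coincide.

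I do not anticipate any real obstacle here: the only point that needs a moment's care is the conjugation bookkeeping when passing between $\langle g, F(\omega)\rangle$ and $\langle F(\omega), g\rangle$, which is dictated by the convention that the inner product on $\h$ is linear in the first coordinate while the $L^2$ inner product has the standard complex conjugation on the second factor. Everything else is a direct application of the adjoint formula from Theorem \ref{TF} and the definition of $S_F$.
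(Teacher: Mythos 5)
Your proof is correct; the paper states this proposition without proof (citing \cite{RaNaDe}), and your argument --- unwinding $\langle T_F T_F^* f, g\rangle = \langle T_F^* f, T_F^* g\rangle_{L^2(\Omega,\mu)}$ via the explicit formula for $T_F^*$ and recognizing the resulting integral as the form $\Psi(f,g)=\langle S_F f,g\rangle$ --- is exactly the standard computation one would supply. The only cosmetic remark is that $\Psi$ and $S_F$ are introduced in the paper for continuous frames, but their construction uses only the Bessel bound, so your appeal to them in the Bessel setting is legitimate.
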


Using an analogous statement as in \cite{RaNaDe} for the synthesis operator, it is easy to prove a characterization of continuous frames in terms of the frame operator.
\begin{thm}
Let $(\Omega, \mu)$ be a $\sigma$-finite measure space.

The mapping $F:\Omega\to\h$ is a continuous
frame with respect to $(\Omega, \mu)$ for $\h$ if and only if the
operator $S_{F}$ is a bounded and invertible operator.
\end{thm}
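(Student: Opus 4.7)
The plan is to prove both directions, with the forward direction essentially handled by the preliminary discussion. If $F$ is a continuous frame with bounds $A,B$, then the sesquilinear form
\[
\Psi(f,g) = \int_{\Omega}\langle f,F(\omega)\rangle\langle F(\omega),g\rangle\,d\mu(\omega)
\]
is bounded (by Cauchy--Schwarz, as shown earlier), so Theorem \ref{murphy} produces the unique $S_F$ with $\langle S_F f,f\rangle = \int_\Omega |\langle f, F(\omega)\rangle|^2\,d\mu(\omega)$. The continuous frame inequalities translate directly into $AI \leq S_F \leq BI$, from which boundedness (with $\|S_F\| \leq B$) and invertibility (with $\|S_F^{-1}\| \leq 1/A$) are immediate.

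For the converse, I would assume that $S_F$, defined via the above sesquilinear form, is bounded and invertible; since $S_F$ arises from $\Psi$ via Theorem \ref{murphy}, this implicitly forces $\Psi$ to be bounded, which is exactly the Bessel property for $F$. The fundamental identity $\langle S_F f,f\rangle = \int_{\Omega}|\langle f,F(\omega)\rangle|^2\,d\mu(\omega)$ then immediately furnishes the upper frame bound $B := \|S_F\|$, and simultaneously shows that $S_F$ is self-adjoint and non-negative, since its quadratic form is real and non-negative.

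The main step is extracting the lower frame bound from invertibility of $S_F$. Since $S_F$ is non-negative and self-adjoint, its positive square root $S_F^{1/2}$ exists, commutes with $S_F$, and has bounded inverse $S_F^{-1/2}$ satisfying $\|S_F^{-1/2}\|^2 = \|S_F^{-1}\|$. Writing $f = S_F^{-1/2} S_F^{1/2} f$ yields $\|f\| \leq \|S_F^{-1/2}\|\cdot \|S_F^{1/2}f\|$, and therefore
\[
\int_{\Omega}|\langle f,F(\omega)\rangle|^2\,d\mu(\omega) = \langle S_F f,f\rangle = \|S_F^{1/2}f\|^2 \geq \frac{1}{\|S_F^{-1}\|}\,\|f\|^2,
\]
giving the lower bound $A = 1/\|S_F^{-1}\|$. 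The principal subtlety is the implicit requirement that $\Psi$ be bounded in order for $S_F$ to be defined at all; once this is granted, the square-root argument is a routine spectral-calculus step. The $\sigma$-finiteness hypothesis does not enter this core argument directly but, as hinted by the reference to the analogous synthesis-operator statement in \cite{RaNaDe}, supports the measurability and integrability framework underlying $\Psi$ and $T_F$.
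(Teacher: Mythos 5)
Your argument is correct, but it is not the route the paper takes: the paper gives no proof at all for this theorem, merely remarking that it follows from the analogous characterization of continuous frames via the synthesis operator $T_F$ in \cite{RaNaDe} (i.e., one would argue that $S_F = T_F T_F^{*}$ is bounded and invertible precisely when $T_F^{*}$ is bounded and bounded below, which is the frame condition). Your proof is instead a direct, self-contained spectral argument. The forward direction is indeed just the preliminary discussion ($AI \leq S_F \leq BI$ gives boundedness and invertibility). For the converse, your key step --- writing $f = S_F^{-1/2} S_F^{1/2} f$ to get
\[
\int_{\Omega} |\langle f, F(\omega)\rangle|^2 \, d\mu(\omega) = \| S_F^{1/2} f \|^2 \geq \frac{1}{\| S_F^{-1} \|} \| f \|^2
\]
--- is sound, and the identity $\| S_F^{-1/2} \|^2 = \| S_F^{-1} \|$ holds by the spectral mapping theorem for the positive operator $S_F$. (One can avoid square roots entirely by applying the Cauchy--Schwarz inequality to the non-negative form $(f,g) \mapsto \langle S_F f, g\rangle$, which gives $\|f\|^4 = |\langle S_F f, S_F^{-1} f\rangle|^2 \leq \langle S_F f, f\rangle \, \langle S_F S_F^{-1} f, S_F^{-1} f\rangle \leq \langle S_F f, f \rangle \, \| S_F^{-1}\| \, \|f\|^2$, yielding the same bound.) You are also right to flag the mild circularity in the statement: $S_F$ is only defined once $\Psi$ is a bounded form, which already encodes the Bessel property, so the genuine content of the converse is extracting the lower bound from invertibility --- exactly what your square-root step does. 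What your approach buys is independence from the synthesis-operator machinery and an explicit optimal lower bound $A = 1/\|S_F^{-1}\|$; what the paper's suggested route buys is consistency with the operator-factorization viewpoint ($S_F = T_F T_F^{*}$) used throughout the rest of the paper.
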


\begin{defn}
Let $F$ and $G$ be continuous frames with respect to $(\Omega
,\mu)$ for $\h$. We call $G$ a \textit{dual} of $F$ if the
following holds true:
\[
\label{dual}
\langle f,g\rangle=\int_{\Omega}\langle
f,F(\omega)\rangle\langle G(\omega),g\rangle d\mu \quad ( f, g\in\h ).
\]
In this case $(F,G)$ is called a \textit{dual pair}. It is clear
that (\ref{dual}) is equivalent with $T_G T^*_F=I$.
\end{defn}

It is certainly possible for a continuous frame $F$ to have only one dual.
In this case we call $F$ a \textit{Riesz-type} frame.

\begin{prop}\cite{Gb} Let $F$ be a continuous frame with respect to $(\Omega
,\mu)$ for $\h$. Then $F$ is a Riesz-type frame if and only if
$\R(T_{F}^* ) = L^2(\Omega , \mu)$.

\end{prop}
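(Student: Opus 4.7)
The strategy is to translate the statement into operator-theoretic language via the reformulation already recorded in the excerpt: $G$ is a dual of $F$ if and only if $T_G T_F^* = I$. Since $F$ is a frame, $S_F = T_F T_F^*$ is invertible on $\h$, so $T_F$ is surjective with closed range, and $T_F^*$ is injective with closed range $\R(T_F^*) = (\ker T_F)^\perp$. In particular, $\R(T_F^*) = L^2(\Omega,\mu)$ is equivalent to $\ker T_F = \{0\}$, and this is the viewpoint I will exploit in both directions.

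For the implication $\R(T_F^*) = L^2(\Omega,\mu) \Rightarrow$ Riesz-type, take two duals $G_1, G_2$ of $F$. Then $(T_{G_1} - T_{G_2})T_F^* = 0$, so $T_{G_1}$ and $T_{G_2}$ agree on $\R(T_F^*) = L^2(\Omega,\mu)$, hence everywhere. Passing to adjoints and using $(T_{G_i}^* h)(\omega) = \langle h, G_i(\omega)\rangle$, it follows that for every $h \in \h$ the map $\omega \mapsto \langle h, G_1(\omega) - G_2(\omega)\rangle$ vanishes in $L^2$. Picking a countable dense subset of $\h$ and merging the associated null sets then yields $G_1 = G_2$ almost everywhere.

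For the converse, I will argue the contrapositive. Assume $\R(T_F^*) \neq L^2(\Omega,\mu)$, so $\ker T_F$ contains some $\varphi \neq 0$. Fix a nonzero $h_0 \in \h$ and define
\[
G(\omega) := S_F^{-1} F(\omega) + \overline{\varphi(\omega)}\, h_0.
\]
A direct estimate using the canonical dual bound and Cauchy--Schwarz shows that $G$ is a continuous Bessel mapping, and a short calculation gives $T_G T_F^* f = f + \langle T_F^* f, \varphi\rangle_{L^2} h_0 = f + \langle f, T_F\varphi\rangle h_0 = f$ for all $f \in \h$. In particular $T_G$ is surjective, so $T_G^*$ is injective and $S_G = T_G T_G^*$ is invertible, which by the earlier characterization makes $G$ a genuine continuous frame and hence a dual of $F$. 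Since $G - S_F^{-1} F = \overline{\varphi}\, h_0 \not\equiv 0$, this dual is distinct from the canonical one, contradicting the Riesz-type hypothesis.

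The most delicate point is ensuring that the constructed $G$ is actually a frame and not merely Bessel; this is handled by the observation that $T_G T_F^* = I$ automatically forces $T_G$ to be surjective, from which invertibility of $S_G$ follows. A secondary subtlety is the passage from pointwise-a.e. equality (for each fixed $h$) to global a.e. equality of the two duals in the first direction, which requires $\h$ to be separable so that a countable dense subset suffices to control all the exceptional sets simultaneously.
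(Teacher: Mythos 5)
The paper does not prove this proposition; it is quoted verbatim from Gabardo and Han \cite{Gb}, so there is no internal argument to compare yours against. On its own merits your proof is correct, and it is essentially the standard argument from the cited source: since $F$ is a frame, $T_F^*$ is bounded below and hence has closed range, so $\R(T_F^*)=L^2(\Omega,\mu)$ is equivalent to $\ker T_F=\{0\}$; surjectivity of $T_F^*$ forces any two duals to have equal synthesis operators, while a nonzero $\varphi\in\ker T_F$ lets you perturb the canonical dual by $\overline{\varphi(\cdot)}\,h_0$ without disturbing the identity $T_GT_F^*=I$. You correctly handle the two points most often glossed over: that the perturbed mapping must be shown to be a genuine frame (not merely Bessel) to qualify as a dual under the paper's definition, which your observation that $T_GT_F^*=I$ forces $T_G$ onto and hence $S_G=T_GT_G^*$ bounded below settles; and that passing from $T_{G_1}=T_{G_2}$ to $G_1=G_2$ a.e.\ requires merging null sets over a countable dense subset of $\h$. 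The only caveats worth recording are that the latter step uses separability of $\h$ (implicit throughout the paper), and that ``unique dual'' must be read modulo sets of measure zero, which is consistent with the weak definition of duality used here.
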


\subsection{Gabor and wavelet systems}\label{gaborandwavelet}

Well known examples for frames are wavelet and Gabor systems. The corresponding continuous wavelet and STFT transforms give rise to continuous frames. We make use of the following unitary operators on $L^2(\mathbb{R})$:
\begin{itemize}
\item Translation: $T_x f(t) := f(t - x)$, for $f \in L^2(\mathbb{R})$ and $x \in \mathbb{R}$
\item Modulation: $M_y f(t) := e^{2\pi i y\cdot t} f(t)$, for $f \in L^2(\mathbb{R})$ and $y \in \mathbb{R}$
\item Dilation: $D_z f(t) := \frac{1}{|z|^{\frac{1}{2}}} f(\frac{t}{z})$, for $f \in L^2(\mathbb{R})$ and $z > 0$
\end{itemize}
\begin{defn}\label{D:Def_Wavelet}
Let $ \psi \in L^{2}(\mathbb{R})$, and let
$$C_{\psi}:=\int_{-\infty}^{+\infty}\frac{|\hat{\psi}(\gamma)|^{2}}
{|\gamma|}\,d\gamma,$$
where $\hat{\psi}$ denotes the Fourier transform of $\psi$. The function $\psi$ is called admissible if $0 < C_{\psi} < +\infty$. For $a,b\in\mathbb{R}$ with
$a\neq0$, let
$$\psi^{a,b}(x):= (T_{b}D_{a}\psi)(x)= \frac{1}{|
a|^{\frac{1}{2}}}\psi(\frac{x-b}{a}), \quad ( x\in\mathbb{R}).$$
Then the \textit{continuous wavelet transform} $W_{\psi} $ is
 defined by
$$W_{\psi}(f)(a,b):=\langle
f,\psi^{a,b}\rangle=\int_{-\infty}^{+\infty}f(x)\frac{1}{|
a|^{\frac{1}{2}}}\overline{\psi(\frac{x-b}{a})}\,dx, \quad f \in L^2(\mathbb{R}). $$
\end{defn}

For an admissible function $\psi$ in $L^2$, the system $\{\psi^{a,b}\}_{a\neq0, b\in\mathbb{R}}$
is a continuous tight frame for $ L^{2}(\mathbb{R})$ with respect to $
\Omega = \mathbb{R}\setminus\{0\}\times\mathbb{R} $ equipped with the
measure $\frac{dadb}{a^{2}}$ and for all $f\in L^{2}(\mathbb{R})$
$$f=\frac{1}{C_{\psi}}\int_{-\infty}^{+\infty}\int_{-\infty}^{+\infty}W_{\psi}(f)(a,b)
\psi^{a,b}\,\frac{dadb}{a^{2}},$$
where the integral is understood in weak sense (this formula is known as the \textit{Calder\'on Reproducing Formula}, cf. \cite{da92}). This system constitutes a continuous tight frame with frame bound $\frac{1}{C_{\psi}}$. If $\psi$ is suitably normed so that $C_{\psi} = 1$, then the frame bound is $1$, i.e. we have a continuous Parseval frame.
For details, see the Proposition 11.1.1 and Corollary 11.1.2 of
\cite{C4}.

\begin{defn}\label{D:Def_STFT}
Fix a function $ g\in L^{2}(\mathbb{R})\setminus\{0\}$. The
\textit{short-time Fourier transform} (STFT) of a function $f\in
L^{2}(\mathbb{R})$ with respect to the window function $g $ is given
by $$
\Psi_{g}(f)(y,\gamma)=\int_{-\infty}^{+\infty}f(x)\overline{g(x-y)}e^{-2\pi
i x\gamma}dx, \quad\quad( y, \gamma \in\mathbb{R}).$$
\end{defn}
Note that in terms of modulation operators and translation operators,
$\Psi_{g}(f)(y,\gamma)=\langle f, M_{\gamma} T_{y}g\rangle$.

Let $ g\in L^{2}(\mathbb{R})\setminus\{0\}$. Then
$\{M_{b}T_{a}g\}_{a,b\in\mathbb{R}}$is a continuous frame for
$L^{2}(\mathbb{R})$ with respect to $ \Omega =\mathbb{R}^{2}$ equipped
with the Lebesgue measure. Let $f_{1}, f_{2}, g_{1}, g_{2}\in
L^{2}(\mathbb{R})$. Then
$$\int_{-\infty}^{+\infty}\int_{-\infty}^{+\infty}\Psi_{g_{1}}(f_{1})(a,b)\overline{\Psi_{g_{2}}(f_{2})(a,b)}dbda
=\langle f_{1}, f_{2}\rangle\langle g_{2},g_{1}\rangle.$$
So this system represent a continuous tight frame with bound $\|g\|^2$. For
details see the proposition 8.1.2 of \cite{C4}.

\section{Continuous Frame Multipliers}

Gabor multipliers \cite{feic} led to the introduction of
Bessel and frame multipliers for abstract Hilbert spaces
$\mathcal{H}_{1}$ and $\mathcal{H}_{2}$. These operators are
defined by a fixed multiplication pattern (the symbol) which is
inserted between the analysis and synthesis operators
\cite{xxlmult1,peter2,peter3}.

\begin{defn}
Let $\mathcal{H}_{1}$ and $\mathcal{H}_{2}$ be Hilbert spaces, let
$(\psi_{k})\subseteq\mathcal{H}_{1}$ and
$(\phi_{k})\subseteq\mathcal{H}_{2}$ be Bessel sequences. Fix
$m = (m_k)\in l^{\infty}$. The operator ${\bf M}_{(m_k), (\phi_k), (\psi_k)}
: \mathcal{H}_{1} \rightarrow \mathcal{H}_{2}$ defined by
$$ {\bf M}_{(m_k), (\phi_k), ( \psi_k )} (f)  =  \sum \limits_k m_k
\langle f,\psi_k\rangle \phi_k,\quad \quad (f \in \mathcal{H}_1)  $$ is called \emph{Bessel multiplier}
for the Bessel sequences $\{\psi_{k}\}$ and $\{\phi_{k}\}$. The
sequence $m$ is called the \emph{symbol} of {\bf M}. For frames the resulting Bessel multiplier is called a \emph{frame multiplier}, for Riesz
sequence a \emph{Riesz multiplier}.
\end{defn}

This motivates the following definition in the continuous case.

\begin{defn}\label{definitioncontframemult}
Let $F$ and $G$ be Bessel mappings for $\h$ with respect
to $(\Omega,\mu)$ and $m:\Omega\rightarrow \mathbb{C}$ be a
measurable function. The operator
$\mathbf{M}_{m,F,G}:\h\rightarrow\h$ weakly defined by
\[
\langle
\mathbf{M}_{m,F,G}f,g\rangle=\int_{\Omega}m(\omega)\langle f,
F(\omega)\rangle\langle G(\omega),g\rangle d\mu(\omega)
\]
for all $f,g\in\h$, is called {\em continuous Bessel multiplier} of $F$
and $G$ with respect to the mapping $m$, called the  {\em symbol}.
\end{defn}
We use the following notation to be understood in weak sense as above:
$$\mathbf{M}_{m,F,G}f:=\int_{\Omega}m(\omega)\langle f,
F(\omega)\rangle G(\omega)d\mu(\omega).$$

\begin{lem}\label{tar}
Let $F$ and $G$ be Bessel mappings for $\h$ with respect
to $(\Omega,\mu)$ with bounds $B_F$ and $B_G$. Let $m\in
L^{\infty}(\Omega,\mu)$. The operator
$\mathbf{M}_{m,F,G}:\h\rightarrow\h$ weakly defined by
\[
\langle
\mathbf{M}_{m,F,G}f,g\rangle=\int_{\Omega}m(\omega)\langle f,
F(\omega)\rangle\langle G(\omega),g\rangle d\mu(\omega)
\]
for all $f,g\in\h$, is well defined and bounded with
\[
\|\mathbf{M}_{m,F,G}\|\leq \|m\|_\infty \sqrt{B_F B_G}.
\]

\end{lem}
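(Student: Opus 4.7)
The plan is to realize $\mathbf{M}_{m,F,G}$ via the Riesz-type representation for bounded sesquilinear forms (Theorem \ref{murphy}), exactly as was done to construct the frame operator $S_F$ earlier in the excerpt. So I would define
\[
\Psi(f,g):=\int_{\Omega}m(\omega)\langle f,F(\omega)\rangle\langle G(\omega),g\rangle\,d\mu(\omega),
\]
show $\Psi$ is a well-defined bounded sesquilinear form with $\|\Psi\|\leq \|m\|_\infty\sqrt{B_F B_G}$, and then invoke Theorem \ref{murphy} to produce the unique bounded operator $\mathbf{M}_{m,F,G}$ with $\langle \mathbf{M}_{m,F,G}f,g\rangle=\Psi(f,g)$ and $\|\mathbf{M}_{m,F,G}\|=\|\Psi\|$.

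First I would check that the integrand is measurable and absolutely integrable. Weak measurability of $F$ and $G$ makes $\omega\mapsto\langle f,F(\omega)\rangle\overline{\langle g,G(\omega)\rangle}$ measurable, and multiplication by the measurable function $m$ preserves this. For integrability, I apply $|m(\omega)|\leq\|m\|_\infty$ pointwise (a.e.) and then Cauchy--Schwarz in $L^2(\Omega,\mu)$:
\[
\int_{\Omega}\bigl|\langle f,F(\omega)\rangle\langle G(\omega),g\rangle\bigr|\,d\mu(\omega)\leq \Bigl(\int_{\Omega}|\langle f,F(\omega)\rangle|^2 d\mu\Bigr)^{1/2}\Bigl(\int_{\Omega}|\langle g,G(\omega)\rangle|^2 d\mu\Bigr)^{1/2}\leq \sqrt{B_F B_G}\,\|f\|\|g\|,
\]
so the Bessel bounds of $F$ and $G$ give the absolute convergence. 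This simultaneously yields $|\Psi(f,g)|\leq \|m\|_\infty\sqrt{B_F B_G}\,\|f\|\|g\|$.

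Sesquilinearity of $\Psi$ is immediate from linearity of $f\mapsto\langle f,F(\omega)\rangle$ and conjugate-linearity of $g\mapsto\langle G(\omega),g\rangle$, together with linearity of the integral (the interchange is justified by the dominated convergence theorem using the bound above as the dominating function for scalar limits). Hence $\Psi$ is a bounded sesquilinear form with $\|\Psi\|\leq\|m\|_\infty\sqrt{B_FB_G}$, and Theorem \ref{murphy} produces the claimed bounded operator with norm bounded by the same quantity.

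There is no genuine obstacle here; the argument is structurally identical to the construction of $S_F$ already given, with the only subtle point being the need to justify that bringing $\|m\|_\infty$ outside the integral is legitimate (which just requires the a.e.\ pointwise bound $|m(\omega)|\leq \|m\|_\infty$) and that $\Psi$ really is bounded as a sesquilinear form rather than merely finite pointwise. Both are handled by the single Cauchy--Schwarz estimate above.
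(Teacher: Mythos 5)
Your proposal is correct and follows essentially the same route as the paper: the paper's proof consists precisely of the estimate $|\langle\mathbf{M}_{m,F,G}f,g\rangle|\leq \|m\|_\infty\sqrt{B_F B_G}\,\|f\|\,\|g\|$ obtained by pulling out $\|m\|_\infty$, applying Cauchy--Schwarz, and using the Bessel bounds, with well-definedness then following from the bounded sesquilinear form representation (Theorem \ref{murphy}). You merely spell out the measurability and sesquilinearity checks that the paper leaves implicit.
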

\begin{proof}
It is clear that for each $f,g\in\h$,
\begin{eqnarray*}
|\langle\mathbf{M}_{m,F,G}f,g\rangle|&\leq&\|m\|_\infty
\int_{\Omega}|\langle f,F(\omega)\rangle\langle
G(\omega),g\rangle|\, d\mu(\omega)\\
&\leq&\|m\|_\infty \left(\int_{\Omega}|\langle
f,F(\omega)\rangle|^{2}
\,d\mu(\omega)\right)^{\frac{1}{2}}\left(\int_{\Omega}|\langle
G(\omega), g \rangle|^{2}\, d\mu(\omega)\right)^{\frac{1}{2}}\\
&\leq& \|m\|_\infty \sqrt{B_F B_G}\| f\|\| g\|.
\end{eqnarray*}
Thus $\mathbf{M}_{m,F,G}$ is well defined and bounded.
\end{proof}

It is easy to prove that if $m(\omega)>0$ a.e., then for any Bessel
function $F$ the multiplier $\mathbf{M}_{m,F,F}$ is a positive
operator, and if $m(\omega)\geq \delta > 0$ for some positive constant $\delta$ and
$\|m\|_{\infty}<\infty$, then $\mathbf{M}_{m,F,F}$ is just the
frame operator of $\sqrt{m}F$ and thus is positive, self-adjoint
and invertible.
\par
By using synthesis and analysis operators, one easily shows that
\begin{equation}\label{rep1}\mathbf{M}_{m,F,G}=T_G D_m T^*_F
\end{equation}

where $D_m:L^{2}(\Omega,\mu)\rightarrow L^{2}(\Omega,\mu) $ and
$(D_m \varphi)(\omega)=m(\omega)\varphi(\omega)$. It is proved that
if $m\in L^{\infty}(\Omega,\mu)$, then $D_m$ is bounded and
$\|D_m\|=\|m\|_{\infty}$, \cite{conw1}.
\par

\begin{prop}
Let $F$ and $G$ be Bessel mappings for $\h$ with respect
to $(\Omega,\mu)$ and $m:\Omega\rightarrow \mathbb{C}$ be a
measurable function, then
 $(\mathbf{M}_{m,F,G})^*=\mathbf{M}_{\overline{m},G,F}.$
\end{prop}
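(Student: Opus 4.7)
The plan is to verify the identity at the level of the weak definition and then invoke uniqueness (Theorem \ref{murphy}) to lift it to an operator identity. I start from
$$\langle \mathbf{M}_{m,F,G}f,g\rangle=\int_{\Omega}m(\omega)\langle f, F(\omega)\rangle\langle G(\omega),g\rangle\, d\mu(\omega)$$
and recall that $(\mathbf{M}_{m,F,G})^*$ is characterized by $\langle f,(\mathbf{M}_{m,F,G})^*g\rangle=\langle \mathbf{M}_{m,F,G}f,g\rangle$ for all $f,g\in\h$, or equivalently $\langle (\mathbf{M}_{m,F,G})^*g,f\rangle=\overline{\langle \mathbf{M}_{m,F,G}f,g\rangle}$. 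So everything hinges on computing the conjugate of the integral on the right-hand side.

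The key step is to pull conjugation through the integral and use that conjugation swaps the arguments of a Hilbert space inner product, obtaining
$$\overline{\langle \mathbf{M}_{m,F,G}f,g\rangle}=\int_{\Omega}\overline{m(\omega)}\,\langle g,G(\omega)\rangle\,\langle F(\omega),f\rangle\, d\mu(\omega),$$
which is precisely the sesquilinear form defining $\mathbf{M}_{\overline{m},G,F}$ evaluated at $(g,f)$. Since this holds for every $f,g\in\h$, Theorem \ref{murphy} applied to the bounded sesquilinear forms associated with each side forces $(\mathbf{M}_{m,F,G})^*=\mathbf{M}_{\overline{m},G,F}$.

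An alternative and slightly slicker route exploits the factorization (\ref{rep1}), $\mathbf{M}_{m,F,G}=T_G D_m T_F^*$: taking adjoints reverses the order and replaces the multiplication operator $D_m$ on $L^2(\Omega,\mu)$ by $D_m^*=D_{\overline{m}}$, yielding $(\mathbf{M}_{m,F,G})^*=T_F D_{\overline{m}} T_G^*=\mathbf{M}_{\overline{m},G,F}$. There is no real obstacle here beyond bookkeeping; the only subtlety is that since $m$ is only assumed measurable (not in $L^\infty$), the factorization argument requires the boundedness of $D_m$ and hence $m\in L^\infty(\Omega,\mu)$, while the direct sesquilinear-form argument remains valid whenever the defining integrals in Definition \ref{definitioncontframemult} make sense.
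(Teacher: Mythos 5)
Your proposal is correct and follows essentially the same route as the paper: both verify the identity $\langle f,(\mathbf{M}_{m,F,G})^*g\rangle=\langle f,\mathbf{M}_{\overline{m},G,F}g\rangle$ by direct manipulation of the defining integral, the only cosmetic difference being that you conjugate the whole integral while the paper moves $m(\omega)$ and $\langle G(\omega),g\rangle$ into the second slot of the inner product. Your added remark about the factorization $T_G D_m T_F^*$ requiring $m\in L^\infty$ is a sensible observation but not part of the paper's argument.
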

\begin{proof}
 For $f,g\in\h$
\begin{eqnarray*}
\langle f,\mathbf{M}^*_{m,F,G}g\rangle
&=&\langle\mathbf{M}_{m,F,G}f,g\rangle\\
&=&\int_{\Omega}m(\omega)\langle f, F(\omega)\rangle\langle
G(\omega),g\rangle d\mu(\omega)\\
&=&\int_{\Omega}\langle f,\overline{m(\omega)}\langle
g,G(\omega)\rangle F(\omega)\rangle d\mu(\omega)\\
&=&\langle f,\mathbf{M}_{\overline{m},G,F}g\rangle.
\end{eqnarray*}
\end{proof}

\subsection{Multiplication operators on $L^2$}

Motivated by the discrete case one might expect that $m\in
L^p$ implies $D_m\in \mathcal{S}_p$, where $\mathcal{S}_p(\h)$
denotes the family of Schatten $p$-class operators on $\h$. For
$p=1$, we have trace class operators and for $p=2$ we have
Hilbert-Schmidt operators.
If this were true, we could easily, using the representation (\ref{rep1}), get results like in \cite{peter2},
since $ \mathcal{S}_p(\h_1,\h_2) $ is a
two sided $*$-ideal of $\mathcal{B}(\h_1, \h_2)$.
Unfortunately, the following proposition shows that at least for multiplication operators on $L^2(\RR^d) = L^2(\RR^d, dx)$ (with $dx$ denoting Lebesgue measure) the
above considerations are never true, which constitutes a major difference between the discrete and the continuous case. The result seems to be mathematical folklore; we give a full proof for completeness.
We will use the following lemma, which is of independent interest.
\begin{lem}
Let $A \subseteq \RR^d$ be a measurable set of positive Lebesgue measure, $\lambda(A) > 0$. Then there exists a partition of $A$ into countably infinitely many mutually disjoint measurable sets $A_n$, $n\in\NN$, of positive measure, i.e. such that
\begin{enumerate}
\item $A = \bigcup_{n=1}^{\infty} A_n$,
\item $A_n \cap A_m = \emptyset$ for $n\not=m$,
\item $\lambda(A_n) > 0$ for all $n\in \NN$.
\end{enumerate}
\end{lem}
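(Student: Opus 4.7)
The plan is to reduce the statement to the classical fact that Lebesgue measure is atomless, and to exploit the continuity of the map $R \mapsto \lambda(A \cap B(0,R))$, where $B(0,R)$ denotes the open ball of radius $R$ centered at the origin. The core idea is to carve $A$ into countably many disjoint annular pieces whose measures form a geometric series summing to $\lambda(A)$, so that every piece automatically has positive measure.

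First I would establish continuity: the function $f(R) := \lambda(A \cap B(0,R))$ is monotone non-decreasing, satisfies $f(0) = 0$, and is continuous on $[0,\infty)$ because the boundary sphere of any ball has Lebesgue measure zero in $\RR^d$, hence $f(R) - f(R') \to 0$ as $R' \uparrow R$ or $R' \downarrow R$ by dominated convergence. Moreover $f(R) \to \lambda(A)$ as $R \to \infty$ by continuity of measure from below.

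In the case $\lambda(A) < \infty$, the intermediate value theorem gives, for every $n \geq 1$, some $R_n \in (0, \infty)$ with $f(R_n) = (1 - 2^{-n})\lambda(A)$. Setting $R_0 := 0$ and
\[
A_n := A \cap \bigl(B(0,R_n) \setminus B(0,R_{n-1})\bigr), \qquad n \geq 1,
\]
gives pairwise disjoint measurable subsets of $A$ with $\lambda(A_n) = 2^{-n}\lambda(A) > 0$. Their union is contained in $A$ and has the same measure as $A$, so the leftover $N := A \setminus \bigcup_n A_n$ is a (possibly nonempty) null set. Replacing $A_1$ by $A_1 \cup N$ yields an honest partition with the required properties.

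The case $\lambda(A) = \infty$ reduces to the previous one: by continuity of $f$ pick any $R_0 > 0$ with $0 < f(R_0) < \infty$, apply the construction above to $B := A \cap B(0,R_0)$ to obtain a partition $(B_n)_{n \geq 1}$, and then set $A_1 := B_1 \cup (A \setminus B(0,R_0))$ and $A_n := B_n$ for $n \geq 2$; the extra piece $A \setminus B(0,R_0)$ has infinite (hence positive) measure and is disjoint from every $B_n$. The only mild obstacle is bookkeeping: one must make sure the null set / infinite remainder is absorbed into one of the pieces so that the $A_n$ genuinely cover $A$, but there is no serious analytic difficulty beyond continuity of $f$ and atomlessness of Lebesgue measure.
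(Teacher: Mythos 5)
Your proof is correct, but it takes a genuinely different route from the paper's. The paper reduces the statement to splitting $A$ into just \emph{two} disjoint pieces of positive measure and then iterates: it covers $\RR^d$ by disjoint half-open cubes $I_n$ with $\lambda(I_n) < \lambda(A)/2$, notes that $\lambda(A) = \sum_n \lambda(A\cap I_n)$ forces at least two indices with $\lambda(A\cap I_n)>0$, and groups the pieces accordingly; induction then yields the countable partition. Your argument instead constructs the full partition in one stroke via the continuity of $R \mapsto \lambda(A\cap B(0,R))$ and the intermediate value theorem, carving off annular pieces of measure exactly $2^{-n}\lambda(A)$. Your approach is quantitatively stronger -- it realizes any prescribed summable sequence of positive measures, i.e.\ it proves atomlessness of $\lambda$ restricted to $A$ in a precise form -- and it handles the bookkeeping (the residual null set and the infinite-measure case) explicitly, which the paper's induction actually glosses over (the intersection of the successive remainders must be absorbed into one of the pieces there as well). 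The paper's argument is more combinatorial and slightly more portable, since it only uses countable additivity and the existence of a cover by sets of small measure, whereas yours additionally uses that spheres are Lebesgue-null and that balls exhaust $\RR^d$; both sets of facts are specific to $\RR^d$, so neither approach is more general in the setting at hand. One small point of care in your write-up: continuity of $f$ from above at $R$ requires $\lambda(A\cap B(0,R'))<\infty$ for some $R'>R$, which holds because balls have finite Lebesgue measure -- worth stating, since you do invoke continuity in the case $\lambda(A)=\infty$.
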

\begin{proof}
It suffices to show that any $A\subseteq \RR^d$ with $\lambda(A) > 0$ can be decomposed in \emph{two} disjoint measurable sets $B$ and $C$ such that $A = B \cup C$, $B \cap C = \emptyset$ and $\lambda(B) > 0$, $\lambda(C) > 0$, since the claim follows from this by induction. Without loss of generality assume further that $\lambda(A) =: L < \infty$. The whole space $\RR^d$ can be covered by mutually disjoint d-dimensional half-open cubes $I_n$, $n\in\NN$, sufficiently small such that $\lambda(I_n) < \frac{L}{2}$ for all $n\in\NN$. Then $A = \bigcup_{n\in\NN} (A\cap I_n)$. Since $(A\cap I_n) \cap (A\cap I_m) = \emptyset$, we have $L = \lambda(A) = \sum_{n\in\NN} \lambda(A\cap I_n)$. Now set
\[
N := \{ n\in\NN\,:\, \lambda(A\cap I_n) > 0 \}.
\]
Since $\lambda(A\cap I_n) \leq \lambda(I_n) \leq \frac{L}{2}$ for all $n\in\NN$, $N$ must clearly contain \emph{at least two elements}, say $n_1$ and $n_2$. Now set
\[
B := A\cap(I_{n_1})
\]
and
\[
C := (A\cap I_{n_2}) \cup \bigcup_{n\not\in N} (A\cap I_n).
\]
Then $B$ and $C$ have the stated properties.
\end{proof}
Now we can prove the following
\begin{prop}
Let $a\in L^{\infty}(\RR^d)$. Denote by $D_a:L^2(\RR^d) \to L^2(\RR^d)$, $f \mapsto a\cdot f$, the bounded multiplication operator with symbol $a$.\\
Then $D_a$ is a compact operator if and only if $a = 0$.
\end{prop}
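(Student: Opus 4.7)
The plan is to prove the contrapositive of the nontrivial direction: assuming $a \neq 0$ (i.e. $a$ does not vanish almost everywhere), I will produce a bounded sequence $(f_n)$ in $L^2(\RR^d)$ converging weakly to $0$ but such that $\|D_a f_n\|$ stays bounded away from $0$. By Lemma \ref{weakly}, this prevents $D_a$ from being compact. The reverse implication is immediate, since $D_0 = 0$ is trivially compact.

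To construct the sequence, I first localize where $a$ is nontrivial. Since $a \neq 0$ on a set of positive measure, there exist $\eps > 0$ and a measurable set $E \subseteq \RR^d$ with $0 < \lambda(E) < \infty$ such that $|a(x)| \geq \eps$ for all $x \in E$; one obtains $E$ by taking $\{x : |a(x)| \geq \eps\}$ for a sufficiently small $\eps$ and intersecting with a ball of large enough radius to have positive but finite measure. Now I invoke the preceding lemma to partition $E$ into countably infinitely many mutually disjoint measurable sets $E_n$ with $\lambda(E_n) > 0$, and define
\[
f_n := \frac{1}{\sqrt{\lambda(E_n)}}\,\chi_{E_n}.
\]
These are unit vectors in $L^2(\RR^d)$ and mutually orthogonal (their supports are disjoint), hence form an orthonormal sequence. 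Every orthonormal sequence in a Hilbert space converges weakly to $0$ (by Bessel's inequality applied to arbitrary $g\in L^2$), so $f_n \to 0$ weakly.

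On the other hand, exploiting the lower bound $|a| \geq \eps$ on $E_n \subseteq E$, a direct computation gives
\[
\|D_a f_n\|_{L^2}^2 = \int_{E_n} \frac{|a(x)|^2}{\lambda(E_n)}\,dx \geq \frac{\eps^2 \lambda(E_n)}{\lambda(E_n)} = \eps^2,
\]
so $\|D_a f_n\| \geq \eps$ for every $n$. This contradicts the conclusion of Lemma \ref{weakly}, which would force $\|D_a f_n\| \to 0$ if $D_a$ were compact. I anticipate no serious obstacle: the lemma already did the heavy lifting (producing a $\sigma$-infinite decomposition of an arbitrary positive-measure set), and the remaining argument is the standard orthonormal-sequence trick. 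The only point requiring a little care is the initial reduction to a set of positive \emph{finite} measure on which $|a|$ is uniformly bounded below, which is routine via the sublevel-set description of essential boundedness.
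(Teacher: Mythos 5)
Your proof is correct and follows essentially the same route as the paper: both localize to a set where $|a|$ is bounded below by a positive constant, partition it via the preceding lemma into countably many disjoint sets of positive measure, and apply Lemma \ref{weakly} to the resulting orthonormal sequence of normalized characteristic functions. The only cosmetic difference is that the paper takes the set $\{x : |a(x)| > \|a\|_{\infty}/2\}$ directly, without your (harmless but unnecessary) intersection with a ball to force finite measure, since the lemma applies to any set of positive measure.
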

\begin{proof}
Assume $a \not= 0$. Let $\| a \|_{\infty} =: c > 0$. Define
\[
A := \{ x\in\RR^d\, : \, |a(x)| > \frac{c}{2} \}.
\]
Then $A$ is a set of positive Lebesgue measure, $\lambda(A) > 0$. Find a partition of $A$ as in the preceding lemma, i.e. into countably infinitely many measurable subsets $A_n$, $n\in\NN$, such that (1) $A = \bigcup_{n\in\NN} A_n$, (2) $A_n \cap A_m = \emptyset$ for $n\not= m$, i.e. the sets $A_n$ are mutually disjoint, and (3) $\lambda(A_n) > 0$ for all $n\in\NN$, i.e. all the sets $A_n$ have strictly positive Lebesgue measure. Then set
\[
f_n := \chi_{A_n}\cdot \frac{1}{\sqrt{\lambda(A_n)}},
\]
with $\chi_{A_n}$ the characteristic function of $A_n$.
Since
\begin{align*}
\langle f_n, f_m \rangle & = \int_{\RR^d} \chi_{A_n}(x)\cdot \frac{1}{\sqrt{\lambda(A_n)}} \cdot \overline{\chi_{A_m}(x)\cdot \frac{1}{\sqrt{\lambda(A_m)}}}\,dx \\
& = \int_{A_n\cap A_m} \frac{1}{\sqrt{\lambda(A_n)\lambda(A_m)}} \\
& = \begin{cases}
\int_{A_n} \frac{1}{\lambda(A_n)}\,dx = 1, & \text{ if $n = m$,} \\
0, & \text{ if $n\not= m$,}
\end{cases}
\end{align*}
the sequence of functions $(f_n)$ constitutes an orthonormal system in $L^2(\RR^d)$. As such, it satisfies $f_n \to 0$ weakly by Bessel's Inequality. But $D_af_n(x) = a(x)\cdot \chi_{A_n}(x)\frac{1}{\sqrt{\lambda(A_n)}}$ and
\begin{align*}
\| D_af_n \|^2 & = \int_{\RR^d} |a(x)|^2\cdot |\chi_{A_n}(x)\frac{1}{\sqrt{\lambda(A_n)}}|^2\,dx \\
& = \int_{A_n} |a(x)|^2 \frac{1}{\lambda(A_n)}\,dx \\
& \geq \int_{A_n} \big(\frac{c}{2}\big)^2 \frac{1}{\lambda(A_n)}\,dx \\
& \geq \big(\frac{c}{2}\big)^2,
\end{align*}
thus $\| D_af_n \| \nrightarrow 0$. Hence $D_a$ cannot be compact by Lemma \ref{weakly}.
\end{proof}

In order to prove sufficient conditions for compactness of continuous frame multipliers, we thus have to choose a different approach than in the discrete setting. This will be addressed in the next section.

\subsection{Compact Multipliers}

\begin{thm}
Let $F$ and $G$ be Bessel mappings for $\h$ with respect to $(\Omega, \mu)$ and let either $F$ or $G$ be norm bounded, i.e. there is a constant $M > 0$ such that $\| F(\omega) \| \le M$ resp. $\| G(\omega) \| \le M$ for almost every $\omega \in \Omega$. Let $m:\Omega\rightarrow \mathbb{C}$ be a (essentially) bounded measurable function with support of finite measure, i.e. there exists a subset $K \subseteq \Omega$ with $\mu(K) < \infty$ such that $m(\omega) = 0$ for almost every $\omega \in \Omega \setminus K$.\\
Then $\mathbf{M}_{m,F,G}$ is a compact operator.
\end{thm}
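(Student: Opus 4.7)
The plan is to use the factorization $\mathbf{M}_{m,F,G} = T_G D_m T_F^*$ together with the characterization of compactness via weak-to-norm convergence (Lemma \ref{weakly}). The key intermediate step is to show that the composition $D_m T_F^*: \h \to L^2(\Omega,\mu)$ (or dually $D_m T_G^*$) is compact, after which compactness of $\mathbf{M}_{m,F,G}$ follows since compact operators form an ideal under composition with bounded operators.

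First I would reduce to the case where $F$ is norm bounded, using the adjoint identity $(\mathbf{M}_{m,F,G})^* = \mathbf{M}_{\overline{m},G,F}$ proved in the preceding proposition together with the fact that an operator is compact iff its adjoint is. So assume $\|F(\omega)\| \le M$ almost everywhere, and let $K \subseteq \Omega$ with $\mu(K) < \infty$ denote the support of $m$.

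Next I would take an arbitrary sequence $f_n \to 0$ weakly in $\h$ and show that $D_m T_F^* f_n \to 0$ in the norm of $L^2(\Omega,\mu)$. Pointwise, $(D_m T_F^* f_n)(\omega) = m(\omega)\langle f_n, F(\omega)\rangle$, which tends to $0$ for every $\omega$ by weak convergence. For the dominating function, by the uniform boundedness principle $\|f_n\| \le R$ for some $R > 0$, hence
\[
|m(\omega)\langle f_n, F(\omega)\rangle|^2 \le \|m\|_\infty^2 R^2 M^2 \chi_K(\omega),
\]
which is integrable because $\mu(K) < \infty$. The dominated convergence theorem then yields $\|D_m T_F^* f_n\|_{L^2} \to 0$, so $D_m T_F^*$ is compact by Lemma \ref{weakly}. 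Since $T_G$ is bounded, $\mathbf{M}_{m,F,G} = T_G \circ (D_m T_F^*)$ is compact.

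I do not expect any real obstacle here; the only subtlety is recognizing that norm boundedness of $F$ (or $G$) is exactly what is needed to produce an integrable majorant on the finite-measure support $K$, thereby bypassing the obstruction exposed in the previous subsection (namely that $D_m$ alone is never compact on $L^2(\RR^d)$ for $m \ne 0$). The dichotomy ``$F$ bounded vs.\ $G$ bounded'' is handled uniformly by passing to the adjoint.
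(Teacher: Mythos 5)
Your proposal is correct and follows essentially the same route as the paper's own proof: the factorization $\mathbf{M}_{m,F,G} = T_G \circ D_m \circ T_F^*$, compactness of $D_m T_F^*$ via Lemma \ref{weakly} and dominated convergence with the constant majorant $\|m\|_\infty^2 R^2 M^2$ on the finite-measure set $K$, and reduction of the case of bounded $G$ to that of bounded $F$ via the adjoint. The only (immaterial) difference is that you perform the adjoint reduction at the outset, whereas the paper does it at the end.
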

\begin{proof}
We have
\[
\mathbf{M}_{m,F,G} = T_G \circ D_m \circ T^{\ast}_F
\]
with $T^{\ast}_F$ the analysis operator for $F$, $D_m$ the multiplication operator with symbol $m$ and $T_G$ the synthesis operator for $G$. Assume first that $F$ is bounded, $\|F(\omega)\| \le M$ for almost all $\omega \in \Omega$. We will show that $D_m \circ T^{\ast}_F: \h \to L^2(\Omega, \mu)$ is compact. To this end, let $f_n \to 0$ weakly. Then
\begin{align*}
\| D_mT^{\ast}_Ff_n \|^2 & = \int_{\Omega} |m(\omega)|^2\cdot | \left< f_n, F(\omega) \right> |^2\,d\mu(\omega) \\
& = \int_{K} |m(\omega)|^2\cdot | \left< f_n, F(\omega) \right> |^2\,d\mu(\omega).
\end{align*}
For the integrand,
\[
|m(\omega)|^2\cdot | \left< f_n, F(\omega) \right> |^2 \to 0
\]
for $n\to\infty$ pointwise for every fixed $\omega \in K$, since the weak convergence of $(f_n)$ implies $\left< f_n, F(\omega) \right> \to 0$ for every $\omega\in\Omega$ fixed. Furthermore, weakly convergent sequences are bounded, thus there is a constant $C>0$ such that $\|f_n \| \le C$ for all $n\in\mathbb{N}$, and
\begin{align*}
|m(\omega)|^2\cdot | \left< f_n, F(\omega) \right> |^2 & \le \|m\|_{\infty}^2\cdot \|f_n\|^2\cdot \|F(\omega)\|^2 \\
& \le \|m\|_{\infty}^2 \cdot C^2 \cdot M^2
\end{align*}
for all $n\in\mathbb{N}$. This constant is an integrable majorant on $K$, so by Lebesgue's Dominated Convergence Theorem
\[
\int_{K} |m(\omega)|^2\cdot | \left< f_n, F(\omega) \right> |^2\,d\mu(\omega) \to 0
\]
for $n\to \infty$. Hence the operator $D_m \circ T^{\ast}_F$ maps weakly convergent sequences to norm convergent ones and is compact by Lemma \ref{weakly}. So $\mathbf{M}_{m,F,G} = T_G \circ (D_m \circ T^{\ast}_F)$ is compact as well.\\
If $G$ is bounded instead of $F$, consider the adjoint operator
\[
\mathbf{M}_{m,F,G}^{\ast} = \mathbf{M}_{\overline{m},G,F} = T_{F}\circ D_{\overline{m}} \circ T^{\ast}_G;
\]
by what we have already shown, $\mathbf{M}_{\overline{m},G,F}$ is compact, hence also $\mathbf{M}_{m,F,G}$.
\end{proof}

\begin{cor} \label{sec:compact2}
Let $F$ and $G$ be Bessel mappings for $\h$ with respect to $(\Omega, \mu)$ and let either $F$ or $G$ be norm bounded. Let $m:\Omega \to \mathbb{C}$ be a (essentially) bounded measurable function that vanishes at infinity, i.e. for every $\varepsilon > 0$ there is a set of finite measure $K = K(\varepsilon) \subseteq \Omega$, $\mu(K) < \infty$, such that $m(\omega)\le \varepsilon$ for almost every $\omega \in \Omega \setminus K$. Then $\mathbf{M}_{m,F,G}$ is compact.
\end{cor}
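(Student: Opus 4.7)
The natural strategy is to approximate $m$ by a sequence of symbols for which the previous theorem applies, and then invoke the fact that the space of compact operators is closed in $\mathcal{B}(\h)$ with respect to the operator norm.

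Concretely, for each $n\in\NN$, use the hypothesis with $\varepsilon = 1/n$ to obtain a measurable set $K_n \subseteq \Omega$ with $\mu(K_n) < \infty$ and $|m(\omega)| \leq 1/n$ for almost every $\omega \in \Omega \setminus K_n$. Set
\[
m_n := m \cdot \chi_{K_n}.
\]
Then $m_n$ is essentially bounded (by $\|m\|_\infty$) and its support is contained in $K_n$, which has finite measure. By the preceding theorem, the multiplier $\mathbf{M}_{m_n, F, G}$ is therefore compact for every $n$.

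Next I would show norm convergence $\mathbf{M}_{m_n, F, G} \to \mathbf{M}_{m, F, G}$. Linearity of the multiplier in the symbol gives $\mathbf{M}_{m, F, G} - \mathbf{M}_{m_n, F, G} = \mathbf{M}_{m - m_n, F, G}$, and applying Lemma \ref{tar} yields
\[
\|\mathbf{M}_{m, F, G} - \mathbf{M}_{m_n, F, G}\| \leq \|m - m_n\|_\infty \sqrt{B_F B_G}.
\]
Since $m - m_n = m\cdot \chi_{\Omega\setminus K_n}$, the choice of $K_n$ forces $\|m - m_n\|_\infty \leq 1/n$, and the right-hand side tends to $0$.

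Finally, since $\mathbf{M}_{m, F, G}$ is the operator-norm limit of the compact operators $\mathbf{M}_{m_n, F, G}$, and the compact operators form a closed subspace of $\mathcal{B}(\h)$, the limit $\mathbf{M}_{m, F, G}$ is compact. There is essentially no serious obstacle here; the only technical point to be careful about is that $K_n$ can be chosen independently of the approximation scheme, i.e.\ once we have the norm estimate from Lemma \ref{tar}, the vanishing-at-infinity condition plugs in directly.
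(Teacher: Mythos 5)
Your proof is correct and follows essentially the same route as the paper: truncate the symbol to $m_n = m\cdot\chi_{K_n}$, apply the preceding theorem to get compactness of each $\mathbf{M}_{m_n,F,G}$, use the norm estimate of Lemma \ref{tar} to obtain operator-norm convergence, and conclude by closedness of the compact operators. No gaps.
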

\begin{proof}
For every $n\in\mathbb{N}$, choose a set $K_n \subseteq \Omega$ such that $\mu(K_n) < \infty$ and $|m(\omega)| \le \frac{1}{n}$ for all $\omega \not\in K_n$. Set
\[
m_n(\omega) := m(\omega)\cdot \chi_{K_n}(\omega)
\]
where $\chi_{K_n}$ denotes the characteristic function of the set $K_n$. Then obviously
\[
\| m_n - m \|_{\infty} \le \frac{1}{n} \to 0
\]
for $n\to \infty$, thus
\[
\| \mathbf{M}_{{m_n},F,G} - \mathbf{M}_{m,F,G}\| \le \| m_n - m \|_{\infty} \sqrt{B_FB_G} \to 0
\]
by Lemma \ref{tar}. The functions $m_n$ are bounded and of finite support, so $\mathbf{M}_{{m_n},F,G}$ is compact for every $n\in\mathbb{N}$ by the preceding theorem, hence $\mathbf{M}_{m,F,G}$ is also compact.
\end{proof}

Now assume that \emph{both} $F$ and $G$ are norm bounded. Then we can prove a trace class result. We use the following criterion:

\begin{lem}\label{trace_class_criterion}
\cite{pietsch} Let $\h$ be a Hilbert space. A bounded operator $T:\h \to \h$ is trace class if and only if $\sum_n | \left<Te_n, e_n \right> | < \infty$ for every orthonormal basis $(e_n)$ of $\h$. Moreover,
\[
\| T \|_{\mathcal{S}^1} = \sup \{\sum_n | \left<Te_n, e_n \right> |\,:\, (e_n) \mbox{ orthonormal basis } \}.
\]
\end{lem}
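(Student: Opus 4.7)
My plan is to leverage the polar decomposition $T = U|T|$, with $|T|:=(T^*T)^{1/2}$ non-negative and $U$ a partial isometry satisfying $\ker U = \ker |T|$, together with the standard definition $\|T\|_{\mathcal{S}^1} = \mathrm{tr}(|T|) = \sum_n \langle |T|e_n, e_n\rangle$ (independent of the chosen basis since $|T| \geq 0$) and the identity $|T^*| = U|T|U^*$, which follows from uniqueness of the positive square root applied to $T T^* = U|T|^2 U^*$.

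For the implication $T\in\mathcal{S}^1 \Rightarrow \sup \leq \|T\|_{\mathcal{S}^1}$, I would factor $T = (U|T|^{1/2})\cdot|T|^{1/2}$ so that $\langle Te_n, e_n\rangle = \langle |T|^{1/2}e_n, |T|^{1/2}U^*e_n\rangle$, and then apply Cauchy-Schwarz termwise followed by Cauchy-Schwarz for $\ell^2$ sequences:
\[
\sum_n |\langle Te_n, e_n\rangle| \leq \Bigl(\sum_n \||T|^{1/2}e_n\|^2\Bigr)^{1/2} \Bigl(\sum_n \||T|^{1/2}U^*e_n\|^2\Bigr)^{1/2}.
\]
The first factor equals $\mathrm{tr}(|T|)=\|T\|_{\mathcal{S}^1}$, and the second equals $\mathrm{tr}(U|T|U^*)=\mathrm{tr}(|T^*|)=\|T\|_{\mathcal{S}^1}$.

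For the converse and the matching lower bound, I would proceed in two stages. First, from the hypothesis that $\sum_n |\langle Te_n, e_n\rangle|$ is finite for every ONB, I would deduce that $T$ must be compact; otherwise $|T|$ would carry either essential spectrum bounded away from zero or infinitely many spectral values above some $\delta>0$, producing a diagonalizing ONB with divergent diagonal sum. Once $T$ is compact, spectral theory supplies an orthonormal eigenbasis $(v_n)$ of $|T|$ with $|T|v_n = s_n v_n$, so that $\langle Tv_n, v_n\rangle = s_n \langle Uv_n, v_n\rangle$. To force a basis that realizes the supremum, I would rotate within each two-dimensional subspace spanned by $v_n$ and $Uv_n$ and insert suitable phases, obtaining for every $\epsilon>0$ a new ONB $(w_n)$ with $\sum_n |\langle Tw_n, w_n\rangle| \geq \sum_n s_n - \epsilon = \|T\|_{\mathcal{S}^1} - \epsilon$.

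The main obstacle is this final rotation construction: the vectors $\{Uv_n : s_n > 0\}$ form an orthonormal set but are not generally orthogonal to the eigenbasis $\{v_n\}$, so pairing each $v_n$ with a rotation partner must be handled by a globally consistent inductive scheme exploiting the isometric action of $U$ on $(\ker|T|)^\perp$. A cleaner alternative is to first establish the two-basis identity $\|T\|_{\mathcal{S}^1} = \sup_{(e_n),(f_n)} \bigl|\sum_n \langle Te_n, f_n\rangle\bigr|$ (which also follows from polar decomposition plus Cauchy-Schwarz) and then reduce the stated single-basis version by absorbing phases $f_n = e^{i\theta_n}e_n$ into one of the orthonormal bases.
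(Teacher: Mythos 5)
The paper gives no proof of this lemma at all; it is simply quoted from Pietsch's book, so your argument has to stand entirely on its own. Its first half does: the bound $\sum_n |\langle Te_n,e_n\rangle| \le \|T\|_{\mathcal{S}_1}$ via the factorization $T=(U|T|^{1/2})\,|T|^{1/2}$, two applications of Cauchy--Schwarz, and $\mathrm{tr}(U|T|U^*)=\mathrm{tr}|T^*|=\mathrm{tr}|T|$ is complete and correct.

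The second half has a genuine gap, sitting exactly where you flag ``the main obstacle,'' and neither of your proposed repairs closes it. First, rotating within the two-dimensional subspaces spanned by $v_n$ and $Uv_n$ provably cannot reach $\sum_n s_n-\epsilon$: for a weighted shift $Tv_n=s_nv_{n+1}$ one has $Uv_n=v_{n+1}$, a unit vector $w=\alpha v_n+\beta v_{n+1}$ gives $\langle Tw,w\rangle=\alpha\bar{\beta}s_n$, so the pair $\{v_n,v_{n+1}\}$ contributes at most $2|\alpha||\beta|s_n\le s_n$ and consumes both indices, while non-adjacent pairs contribute nothing; any such pairing therefore recovers at most an ``independent set'' sum, which is about half of $\|T\|_{\mathcal{S}_1}$ when the $s_n$ are comparable. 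One genuinely needs blocks of growing length (e.g.\ discrete Fourier rotations $w_k=N^{-1/2}\sum_{j=1}^N e^{2\pi ijk/N}v_j$ on $\mathrm{span}\{v_1,\dots,v_N\}$, which recover $s_1+\dots+s_{N-1}$ and lose only the boundary term), chosen adaptively so the lost boundary terms sum to less than $\epsilon$. Second, the ``cleaner alternative'' is circular: restricting the two-basis supremum to $f_n=e^{i\theta_n}e_n$ only re-proves the upper bound $\sup\le\|T\|_{\mathcal{S}_1}$; the optimizing pair $(v_n,Uv_n)$ is not of that restricted form. Third, the compactness step inherits the same defect: non-compactness yields an ONB with $\sum_n\langle |T|e_n,e_n\rangle=\infty$, but the diagonal of $T$ itself in that basis can vanish identically (the shift again), so a ``diagonalizing ONB with divergent diagonal sum'' is not automatic. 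For the if-and-only-if statement there is a standard route that avoids all of this: write $T=A+iB$ with $A,B$ self-adjoint, note $\langle Ae_n,e_n\rangle=\mathrm{Re}\,\langle Te_n,e_n\rangle$ so the hypothesis passes to $A$ and $B$, and use an ONB adapted to the spectral projections of $A$ onto $(0,\infty)$ and $(-\infty,0)$ to force $\mathrm{tr}\,A_+ +\mathrm{tr}\,A_-<\infty$; the block-rotation construction is then needed only for the exact norm identity.
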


\begin{thm}\label{ubs}
Let $F$ and $G$ be norm bounded Bessel mappings with norm bounds $L_F$ and $L_G$, respectively. Let $m \in L^1(\Omega,\mu)$.\\
Then $\mathbf{M}_{m,F,G}$ is a well defined bounded operator and a trace class operator with $\| \mathbf{M}_{m,F,G} \|_{\mathcal{S}_1} \le \|m\|_{1}L_F L_G$.
\end{thm}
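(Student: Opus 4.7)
The plan is to first verify boundedness directly from the weak definition and then apply the trace-class criterion of Lemma~\ref{trace_class_criterion} to an arbitrary orthonormal basis.

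For boundedness and well-definedness: using the norm bounds and Cauchy--Schwarz on the inner products (not the integral), for any $f,g \in \h$,
\begin{align*}
|\langle \mathbf{M}_{m,F,G} f, g\rangle|
&\le \int_{\Omega} |m(\omega)|\,|\langle f, F(\omega)\rangle|\,|\langle G(\omega), g\rangle|\,d\mu(\omega) \\
&\le L_F L_G \|f\|\,\|g\|\int_\Omega |m(\omega)|\,d\mu(\omega) = \|m\|_1\,L_F L_G\,\|f\|\,\|g\|,
\end{align*}
so the sesquilinear form is bounded and, by Theorem~\ref{murphy}, $\mathbf{M}_{m,F,G}$ is a well-defined bounded operator.

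For the trace-class estimate, let $(e_n)$ be any orthonormal basis of $\h$. Then
\[
\sum_n |\langle \mathbf{M}_{m,F,G} e_n, e_n\rangle|
\le \sum_n \int_\Omega |m(\omega)|\,|\langle e_n, F(\omega)\rangle|\,|\langle G(\omega), e_n\rangle|\,d\mu(\omega).
\]
Since the integrand is nonnegative, Tonelli's theorem allows swapping sum and integral, giving
\[
\sum_n |\langle \mathbf{M}_{m,F,G} e_n, e_n\rangle|
\le \int_\Omega |m(\omega)| \sum_n |\langle e_n, F(\omega)\rangle|\,|\langle G(\omega), e_n\rangle|\,d\mu(\omega).
\]
For fixed $\omega$, applying the Cauchy--Schwarz inequality to the inner sum and then Parseval's identity yields
\[
\sum_n |\langle e_n, F(\omega)\rangle|\,|\langle e_n, G(\omega)\rangle|
\le \Bigl(\sum_n |\langle e_n, F(\omega)\rangle|^2\Bigr)^{1/2}\Bigl(\sum_n |\langle e_n, G(\omega)\rangle|^2\Bigr)^{1/2} = \|F(\omega)\|\,\|G(\omega)\|.
\]
Using the a.e.\ bounds $\|F(\omega)\| \le L_F$ and $\|G(\omega)\| \le L_G$, we conclude
\[
\sum_n |\langle \mathbf{M}_{m,F,G} e_n, e_n\rangle|
\le L_F L_G \int_\Omega |m(\omega)|\,d\mu(\omega) = \|m\|_1\,L_F L_G.
\]
Since the bound is uniform over all orthonormal bases, Lemma~\ref{trace_class_criterion} gives that $\mathbf{M}_{m,F,G}$ is trace class with $\|\mathbf{M}_{m,F,G}\|_{\mathcal{S}_1} \le \|m\|_1 L_F L_G$.

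The only subtlety is justifying the interchange of sum and integral, but nonnegativity of the integrand makes Tonelli directly applicable, so there is no real obstacle. The key insight is that Parseval's identity, applied pointwise in $\omega$ to the orthonormal expansion of $F(\omega)$ and $G(\omega)$, replaces the discrete-style Schatten computation that fails for multiplication operators on $L^2$ (as shown in the previous subsection).
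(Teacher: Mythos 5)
Your proof is correct and follows essentially the same route as the paper: Cauchy--Schwarz on the inner products for boundedness, then the orthonormal-basis trace-class criterion with a sum--integral interchange, Cauchy--Schwarz, and Parseval. The only (harmless) difference is that you invoke Tonelli where the paper cites Fubini; your justification via nonnegativity is if anything slightly cleaner.
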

\begin{proof}
For arbitrary $f, g \in \h$, we have
\begin{align*}
\int_{\Omega} |m(\omega)|& | \left< f, F(\omega) \right> || \left< G(\omega), g \right> |\,d\mu(\omega) \\
& \leq \int_{\Omega} |m(\omega)| \|f\| \|F(\omega)\| \|g\| \|G(\omega)\|\,d\mu(\omega) \\
& \leq \|f\| \|g\| L_F L_G \int_{\Omega} |m(\omega)|\,d\mu(\omega) \\
& = \|f\| \|g\| L_F L_G \|m\|_1,
\end{align*}
thus $\mathbf{M}_{m,F,G}$ is a well defined bounded linear operator by Theorem \ref{murphy}.
\\
Take an arbitrary orthonormal basis $(e_n)$ of $\h$. Then
\begin{align*}
\sum_n & |\left< \mathbf{M}_{m,F,G}e_n, e_n \right>| \\
& = \sum_n |\int_{\Omega} m(\omega) \left< e_n, F(\omega) \right>\left< G(\omega), e_n \right>\,d\mu(\omega) | \\
& \leq \sum_n \int_{\Omega} | m(\omega)|\cdot | \left< e_n, F(\omega) \right>| \cdot |\left< G(\omega), e_n \right>  |\,d\mu(\omega) \\
& \stackrel{\mbox{\tiny Fub.}}{=} \int_{\Omega} |m(\omega)| \sum_n | \left< e_n, F(\omega) \right>| \cdot |\left< G(\omega), e_n \right>  |\,d\mu(\omega) \\
& \stackrel{\mbox{\tiny C.-S.}}{\leq} \int_{\Omega} |m(\omega)| \left( \sum_n | \left< e_n, F(\omega) \right>|^2 \right)^{1/2}\left( \sum_n |\left< G(\omega), e_n \right>  |^2 \right)^{1/2}\,d\mu(\omega) \\
& = \int_{\Omega} |m(\omega)| \|F(\omega) \|\|G(\omega) \|\,d\mu(\omega) \\
& \leq \|m\|_{1}L_F L_G,
\end{align*}
where we have used Fubini's Theorem and the Cauchy-Schwarz's Inequality at the indicated places.
Hence $\mathbf{M}_{m,F,G}$ is trace class with norm estimate $\| \mathbf{M}_{m,F,G} \|_{\mathcal{S}_1} \le \|m\|_{1}L_F L_G$, by the previous Lemma \ref{trace_class_criterion}.
\end{proof}

Having established the trace class case, we are now able to extend the result to the whole family of Schatten p-classes by complex interpolation, see e.g. \cite{belo}.

\begin{thm} \label{sec:schatten1}
Let $F$ and $G$ be norm bounded Bessel mappings with norm bounds $L_F$ and $L_G$, respectively. Let $m \in L^p(\Omega,\mu)$, $1 < p < \infty$.\\
Then $\mathbf{M}_{m,F,G}$ is a well defined bounded operator that belongs to the Schatten p-class $\mathcal{S}_p(\h)$, with norm estimate
\[
\| \mathbf{M}_{m,F,G} \|_{\mathcal{S}_p} \leq \| m \|_p \left( L_F L_G \right)^{1/p}\left(B_F B_G \right)^{1/2q}.
\]
\end{thm}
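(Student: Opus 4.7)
The plan is to interpolate between two endpoint estimates that are already in hand. Observe that the map $\Phi : m \mapsto \mathbf{M}_{m,F,G}$ is linear in the symbol. Lemma~\ref{tar} gives the endpoint $\|\Phi(m)\|_{\mathcal{S}_\infty} = \|\Phi(m)\|_{\mathcal{B}(\h)} \le \sqrt{B_FB_G}\,\|m\|_\infty$, so that $\Phi : L^\infty(\Omega,\mu) \to \mathcal{S}_\infty(\h)$ is bounded, while Theorem~\ref{ubs} gives $\|\Phi(m)\|_{\mathcal{S}_1} \le L_FL_G\,\|m\|_1$, so that $\Phi : L^1(\Omega,\mu) \to \mathcal{S}_1(\h)$ is bounded. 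The norm-boundedness of both $F$ and $G$, which is hypothesized here, is precisely what makes the trace-class endpoint available.

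First I would restrict $\Phi$ to $L^1(\Omega,\mu) \cap L^\infty(\Omega,\mu)$, which is dense in $L^p(\Omega,\mu)$ for every $1 \le p < \infty$ and on which both endpoint estimates apply simultaneously. Then I would invoke the Calder\'on complex interpolation method. On the source side one has the classical Riesz--Thorin identification
\[
[L^1(\Omega,\mu),\, L^\infty(\Omega,\mu)]_{\theta} \;=\; L^{p}(\Omega,\mu)\qquad\text{with}\qquad \tfrac{1}{p} = 1-\theta,
\]
and on the target side the Schatten ideals likewise form an interpolation scale,
\[
[\mathcal{S}_1(\h),\, \mathcal{S}_\infty(\h)]_{\theta} \;=\; \mathcal{S}_{p}(\h),
\]
with the same relation between $\theta$ and $p$; the latter identification is Calder\'on's theorem for Schatten classes (see e.g.\ \cite{belo}). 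Choosing $\theta = 1/q$ so that $1-\theta = 1/p$, the interpolation theorem delivers a bounded extension of $\Phi$ to all of $L^p(\Omega,\mu)$ satisfying
\[
\|\Phi\|_{L^p\to\mathcal{S}_p} \;\le\; \|\Phi\|_{L^1\to\mathcal{S}_1}^{\,1-\theta}\,\|\Phi\|_{L^\infty\to\mathcal{S}_\infty}^{\,\theta} \;\le\; (L_FL_G)^{1/p}\,\bigl(\sqrt{B_FB_G}\bigr)^{1/q},
\]
which is exactly the claimed inequality after rewriting $\bigl(\sqrt{B_FB_G}\bigr)^{1/q} = (B_FB_G)^{1/(2q)}$.

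The main obstacle is simply ensuring the interpolation machinery applies to the exact pair of scales at hand; once the two identifications $L^p = [L^1,L^\infty]_{\theta}$ and $\mathcal{S}_p = [\mathcal{S}_1,\mathcal{S}_\infty]_{\theta}$ are cited, the abstract interpolation principle does all the work. A more hands-on alternative would be to mimic the classical Riesz--Thorin argument directly: form an analytic family of symbols $m_z(\omega) := |m(\omega)|^{p\,\zeta(z)}\,\mathrm{sgn}(m(\omega))$ on the strip $\{0\le\Re z\le 1\}$ with an affine $\zeta$ chosen so that $m_{\theta} = m$, introduce a dual analytic family of finite-rank operators $U_z$ to realize the $\mathcal{S}_p$-norm of $\Phi(m)$ via trace duality with $\mathcal{S}_q$, and apply Hadamard's three-lines lemma to $z \mapsto \mathrm{tr}\bigl(\Phi(m_z)\,U_z\bigr)$ using the two endpoint bounds on the boundary of the strip. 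Either route yields the stated estimate with matching constants.
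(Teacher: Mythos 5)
Your proof is correct and follows essentially the same route as the paper: complex interpolation of the linear map $m \mapsto \mathbf{M}_{m,F,G}$ between the $L^1(\Omega,\mu) \to \mathcal{S}_1(\h)$ bound of Theorem~\ref{ubs} and the $L^\infty(\Omega,\mu) \to \mathcal{B}(\h)$ bound of Lemma~\ref{tar}, with $\theta = 1/q$, yielding the identical constant $(L_FL_G)^{1/p}(B_FB_G)^{1/2q}$. The one point the paper treats more explicitly is the well-definedness of $\mathbf{M}_{m,F,G}$ for $m \in L^p$ via a direct H\"older estimate (using that $\omega \mapsto \langle f, F(\omega)\rangle\langle G(\omega), g\rangle$ lies in $L^1 \cap L^\infty \subseteq L^q$), which is also what one needs to identify your density-extension from $L^1 \cap L^\infty$ with the weakly defined multiplier of Definition~\ref{definitioncontframemult}.
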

\begin{proof}
We first show that the operator is well defined by the weak definition in \ref{definitioncontframemult}. To this end, let $f,g \in \h$ be fixed. Observe that the functions $\omega \mapsto \left< f, F(\omega) \right>$ resp. $\omega \mapsto \left< G(\omega), g \right>$ are bounded (by $L_F \|f\|$ resp. $L_G \|g\|$) and belong to $L^2(\Omega, \mu)$ (because $F$ and $G$ are Bessel mappings), hence their product $\omega \mapsto \left< f, F(\omega) \right>\left< G(\omega), g \right>$ is in $L^1(\Omega, \mu) \cap L^{\infty}(\Omega, \mu)$. But $L^1(\Omega, \mu) \cap L^{\infty}(\Omega, \mu) \subseteq L^q(\Omega, \mu)$ for all $1<q<\infty$.

Thus, for all $f,g \in\h$,
\begin{align*}
| \left< \mathbf{M}_{m,F,G}f,g \right> | & \leq \int_{\Omega} |m(\omega)| | \left< f, F(\omega) \right>\left< G(\omega), g \right> |\,d\mu(\omega) \\
& \leq \| m \|_{p} \| \left< f, F(\cdot) \right>\left< G(\cdot), g \right> \|_{q}
\end{align*}
by H\"older's Inequality, with $\frac{1}{p} + \frac{1}{q} = 1$. The second term can be estimated as
\begin{align*}
\| \left< f, F(\cdot) \right>&\left< G(\cdot), g \right> \|_{q} \\
& \leq  \|\left< f, F(\cdot) \right>\left< G(\cdot), g \right> \|^{q-1}_{\infty} \| \left< f, F(\cdot) \right>\left< G(\cdot), g \right> \|_{1} \\
& \leq L_F^{q-1}\|f\|^{q-1} L_G^{q-1}\|g\|^{q-1} \int_{\Omega} | \left< f, F(\omega) \right>\left< G(\omega), g \right> | \,d\mu(\omega) \\
& \stackrel{\mbox{\tiny C.-S.}}{\leq} L_F^{q-1}\|f\|^{q-1} L_G^{q-1}\|g\|^{q-1} \sqrt{B_F} \,\|f\| \sqrt{B_G}\, \|g\| \\
& = L_F^{q-1} L_G^{q-1}\sqrt{B_F} \sqrt{B_G}\, \|f\|^q \|g\|^q.
\end{align*}
Now assume that $\|f\|, \|g\| \leq 1$. Then
\[
| \left< \mathbf{M}_{m,F,G}f,g \right> | \leq \| m \|_{p}(L_F L_G)^{q-1}\sqrt{B_F B_G},
\]
thus for arbitrary $f,g \in\h$
\[
| \left< \mathbf{M}_{m,F,G}f,g \right> | \leq \| m \|_{p}(L_F L_G)^{q-1}\sqrt{B_F B_G}\,\|f\| \|g\|.
\]
This proves that $\mathbf{M}_{m,F,G}$ is a well defined bounded operator.\\
Now Lemma \ref{tar} shows that the mapping $L^{\infty}(\Omega,\mu) \to \mathcal{B}(\h)$, $m \mapsto \mathbf{M}_{m,F,G}$, is a bounded linear operator. The same is true for the mapping $L^1(\Omega, \mu) \to \mathcal{S}_1(\h)$, $m \mapsto \mathbf{M}_{m,F,G}$, by Theorem \ref{ubs}. Now let $\theta = 1 - \frac{1}{p} = \frac{1}{q}$ (i.e. such that $\frac{1}{p} = \frac{1-\theta}{1} + \frac{\theta}{\infty}$). A standard complex interpolation (\cite{belo}), between the Banach spaces $[L^1(\Omega,\mu), L^{\infty}(\Omega,\mu) ]_{\theta} = L^p(\Omega,\mu)$ on the one hand and $[ \mathcal{S}_1(\h), \mathcal{B}(\h)] = [\mathcal{S}_1(\h), \mathcal{S}_{\infty}(\h) ]_{\theta} = \mathcal{S}_p(\h)$ on the other, proves that the mapping $m \mapsto \mathbf{M}_{m,F,G}$ gives also a bounded linear operator from $L^p(\Omega,\mu)$ to the Schatten p-class $\mathcal{S}_p(\h)$ with norm estimate
\begin{align*}
\| \mathbf{M}_{m,F,G} \|_{\mathcal{S}_p} & \leq \| m \|_p \left( L_F L_G \right)^{1 - \theta}\left( \sqrt{B_F B_G} \right)^{\theta} \\
& = \| m \|_p \left( L_F L_G \right)^{1/p}\left(B_F B_G \right)^{1/2q}.
\end{align*} 
\end{proof}

\subsection{Changing the Ingredients}

Like discrete Bessel multipliers \cite{xxlmult1}, a continuous
Bessel multiplier clearly depends on the chosen symbol, analysis
and synthesis functions. A natural question arises: What happens
if these items are changed? Are the frame multipliers similar to
each other if the symbol or the frames are similar to each other
(in the right similarity sense)? Do the multipliers depend 
continuously on the input data?
\par Let $m,m'\in L^\infty$ and $F,F',G,G'$ be Bessel functions. The
representation (\ref{rep1}) and linearity of the operators $T_F,
T_F', T_G, T_G', D_m$ and $D_m' $ result
\[\label{1}
\mathbf{M}_{m,F,G}-\mathbf{M}_{m',F,G}=T_G D_{m-m'}
T^*_F=\mathbf{M}_{m-m',F,G},
\]
\[\label{2}
\mathbf{M}_{m,F,G}-\mathbf{M}_{m,F',G}=T_G D_{m}
T^*_{F-F'}=\mathbf{M}_{m,F-F',G},
\]
\[\label{3}
\mathbf{M}_{m,F,G}-\mathbf{M}_{m,F,G'}=T_{G-G'} D_{m}
T^*_{F}=\mathbf{M}_{m,F,G-G'}.
\]

By adapting the methods in \cite{xxlmult1} and using the above
identities, we can prove the following theorem about continuous
Bessel multipliers.

\begin{thm} \label{sec:frammulprop1}
Let $F$ and $G$ be Bessel mappings for $\h$ with
respect to $(\Omega,\mu)$ and $m:\Omega\rightarrow \mathbb{C}$ be
a measurable function. Let $m^{(n)}$ be
functions indexed by $n\in\mathbb{N}$ with $m^{(n)} \to m$
in $L^{p}(\Omega, \mu)$. Then $\mathbf{M}_{m^{(n)},F,G}$ converges to $\mathbf{M}_{m,F,G}$ 
in the Schatten-$p$-norm, i.e. 
$ \| \mathbf{M}_{m^{(n)},F,G} - \mathbf{M}_{m,F,G} \|_{\mathcal{S}_p}  \rightarrow 0 $, as
$n\rightarrow\infty$.
\end{thm}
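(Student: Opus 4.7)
The approach is to exploit the linearity of the continuous Bessel multiplier in its symbol together with the norm bounds already established. Using the identity (\ref{1}), namely
\[
\mathbf{M}_{m^{(n)},F,G}-\mathbf{M}_{m,F,G}=\mathbf{M}_{m^{(n)}-m,F,G},
\]
the problem reduces to estimating the Schatten-$p$-norm of a single multiplier $\mathbf{M}_{m^{(n)}-m,F,G}$ in terms of $\|m^{(n)}-m\|_p$, and then invoking the hypothesis $m^{(n)}\to m$ in $L^p(\Omega,\mu)$.

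Concretely, I would split into the three cases for which a norm estimate on the map $m\mapsto \mathbf{M}_{m,F,G}$ has been proved. For $p=\infty$, Lemma \ref{tar} gives
\[
\|\mathbf{M}_{m^{(n)}-m,F,G}\|_{\mathcal{B}(\h)}\leq \|m^{(n)}-m\|_\infty\sqrt{B_F B_G}.
\]
For $p=1$ (under the standing assumption, inherited from Theorem \ref{ubs}, that $F$ and $G$ are norm bounded with bounds $L_F,L_G$), Theorem \ref{ubs} gives
\[
\|\mathbf{M}_{m^{(n)}-m,F,G}\|_{\mathcal{S}_1}\leq \|m^{(n)}-m\|_1\, L_F L_G.
\]
For $1<p<\infty$, Theorem \ref{sec:schatten1} gives
\[
\|\mathbf{M}_{m^{(n)}-m,F,G}\|_{\mathcal{S}_p}\leq \|m^{(n)}-m\|_p\,(L_F L_G)^{1/p}(B_F B_G)^{1/2q},
\]
with $1/p+1/q=1$. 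In every case the right-hand side is a fixed constant, independent of $n$, times $\|m^{(n)}-m\|_p$, and this converges to zero by hypothesis. Hence $\|\mathbf{M}_{m^{(n)},F,G}-\mathbf{M}_{m,F,G}\|_{\mathcal{S}_p}\to 0$.

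There is essentially no difficulty beyond bookkeeping: the real content lies in the previously established continuity of $m\mapsto \mathbf{M}_{m,F,G}$ as a linear map from $L^p$ into $\mathcal{S}_p$. The only mild subtlety worth flagging is that for finite $p$ the appeal to Theorems \ref{ubs} and \ref{sec:schatten1} tacitly requires $F$ and $G$ to be norm-bounded Bessel mappings; for $p=\infty$ Lemma \ref{tar} alone suffices and no such assumption is needed, with Schatten-$\infty$-convergence interpreted as operator-norm convergence.
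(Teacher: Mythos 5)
Your proof is correct and follows essentially the same route as the paper, whose own proof is a one-liner citing identity (\ref{1}) together with the norm estimates of Lemma \ref{tar}, Theorem \ref{ubs} and Theorem \ref{sec:schatten1}. Your added remark that the finite-$p$ cases tacitly require $F$ and $G$ to be norm bounded (a hypothesis missing from the theorem's statement but needed to invoke Theorems \ref{ubs} and \ref{sec:schatten1}) is a fair and worthwhile clarification.
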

\begin{proof}
The proof follows immediately from (\ref{1}) and the norm estimate in Lemma \ref{tar} and Theorem \ref{sec:schatten1}.
\end{proof}
In particular this is also valid for trace class ($p = 1$) operators and bounded operators ($p = \infty$).

\begin{thm}
Let $m \in L^2(\Omega, \mu)$. Let $F$ and $G$ be Bessel mappings for $\h$.
Let $F^{(n)}$ be a sequence of Bessel mappings such that $F^{(n)}(\omega) \rightarrow F(\omega)$ in a uniform strong sense. 
Then $\mathbf{M}_{m,F^{(n)},G}$ converges to $\mathbf{M}_{m,F,G}$ in operator norm.
\end{thm}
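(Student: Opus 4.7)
The plan is to exploit the second of the three displayed identities in the ``Changing the Ingredients'' subsection, which specialises to
\[
\mathbf{M}_{m,F^{(n)},G} - \mathbf{M}_{m,F,G} = T_G D_m T^*_{F^{(n)} - F} = \mathbf{M}_{m,\, F^{(n)}-F,\, G}.
\]
Thus the entire task reduces to showing $\| \mathbf{M}_{m,\, F^{(n)}-F,\, G} \| \to 0$. I read the phrase ``$F^{(n)}(\omega) \to F(\omega)$ in a uniform strong sense'' as meaning $\varepsilon_n := \operatorname{ess\,sup}_{\omega \in \Omega} \| F^{(n)}(\omega) - F(\omega) \|_\h \to 0$ as $n \to \infty$.

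Granting that, I would go back to the weak definition of the multiplier and estimate directly. For arbitrary $f, g \in \h$, one application of Cauchy--Schwarz in $\h$, followed by H\"older with exponents $(2,2)$ on $L^2(\Omega,\mu)$, together with the Bessel inequality for $G$, yields
\begin{align*}
|\langle \mathbf{M}_{m,\, F^{(n)}-F,\, G}\, f, g \rangle|
& \leq \int_\Omega |m(\omega)| \cdot |\langle f, F^{(n)}(\omega) - F(\omega)\rangle| \cdot |\langle G(\omega), g\rangle| \, d\mu(\omega) \\
& \leq \varepsilon_n \|f\| \int_\Omega |m(\omega)| \cdot |\langle G(\omega), g\rangle| \, d\mu(\omega) \\
& \leq \varepsilon_n \|f\| \, \|m\|_2 \left( \int_\Omega |\langle G(\omega), g\rangle|^2 \, d\mu(\omega) \right)^{1/2} \\
& \leq \varepsilon_n \|m\|_2 \sqrt{B_G} \, \|f\| \, \|g\|.
\end{align*}
Taking the supremum over $f, g$ in the unit ball of $\h$ gives $\| \mathbf{M}_{m,F^{(n)},G} - \mathbf{M}_{m,F,G} \| \leq \varepsilon_n \|m\|_2 \sqrt{B_G} \to 0$, as desired.

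The main subtlety is less computational than conceptual: Lemma \ref{tar} does \emph{not} apply directly here, since $m$ is only assumed to lie in $L^2(\Omega,\mu)$, not in $L^\infty(\Omega,\mu)$, and the difference $F^{(n)}-F$ need not have a small Bessel bound in any obvious way. The key observation is that the assumed uniform norm convergence of $F^{(n)}$ to $F$ allows one to pull the scalar factor $\varepsilon_n$ out of the inner product $\langle f, F^{(n)}(\omega) - F(\omega)\rangle$ \emph{pointwise} in $\omega$; the remaining integral then splits perfectly by Cauchy--Schwarz into an $L^2$-norm of $m$ and the Bessel bound for $G$. Essentially, the $L^2$ assumption on $m$ is paired with the $L^2$ side supplied by the Bessel mapping $G$, while the convergence of $F^{(n)}$ is entirely absorbed into the pointwise uniform bound $\varepsilon_n$.
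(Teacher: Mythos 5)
Your proof is correct and follows essentially the same route as the paper's: both reduce the difference to $\mathbf{M}_{m,\,F^{(n)}-F,\,G}$, bound $|\langle f, F^{(n)}(\omega)-F(\omega)\rangle|$ pointwise by $\varepsilon_n\|f\|$ using the uniform convergence, and pair $\|m\|_2$ with the Bessel bound of $G$ via Cauchy--Schwarz, arriving at the identical estimate $\varepsilon_n\|m\|_2\sqrt{B_G}\,\|f\|\,\|g\|$. The only difference is the order in which the pointwise bound and the Cauchy--Schwarz step are applied, which is immaterial.
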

\begin{proof}
Let $f,g \in \h$. For given $\epsilon > 0$, choose $N$ such that $\| F^{(n)}(\omega) - F(\omega) \| \leq \epsilon$ for all $n \geq N$, for all $\omega \in \Omega$. Then
\begin{align*}
| \langle (\mathbf{M}_{m,F^{(n)},G} & - \mathbf{M}_{m,F,G})f, g \rangle | \\
& \leq \int_{\Omega} | m(\omega) |  |\langle f, F^{(n)}(\omega) - F(\omega) \rangle   |  | \langle G(\omega), g \rangle  |\,d\mu(\omega) \\
& \leq \left( \int_{\Omega} | m(\omega) |^2  |\langle f, F^{(n)}(\omega) - F(\omega) \rangle   |^2 \,d\mu(\omega) \right)^{1/2}   \left( B_G \right)^{1/2} \| g \| \\
& \leq \epsilon \| m \|_{2} \|f \| \left( B_G \right)^{1/2} \| g \|. 
\end{align*}
Thus by Theorem \ref{murphy}
\[
\| \mathbf{M}_{m,F^{(n)},G}  - \mathbf{M}_{m,F,G} \| \leq \epsilon \| m \|_2 \left( B_G \right)^{1/2},
\]
so $\mathbf{M}_{m,F^{(n)},G}$ converges to $\mathbf{M}_{m,F,G}$ in operator norm.
\end{proof}

For symbols $m \in L^1(\Omega, \mu)$, we can find the following
theorem.
\begin{thm}
Let $m \in L^1(\Omega, \mu)$. Let $F$ and $G$ be Bessel mappings for $\h$ and $G$ be norm-bounded.
Let $F^{(n)}$ be a sequence of Bessel mappings such that $F^{(n)}(\omega) \rightarrow F(\omega)$ in a uniform strong sense. 
Then $\mathbf{M}_{m,F^{(n)},G}$ converges to $\mathbf{M}_{m,F,G}$ in operator norm.
\end{thm}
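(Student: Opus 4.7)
The plan is to imitate the strategy of the previous theorem but, since we now only have $m \in L^1(\Omega, \mu)$ instead of $L^2$, to spend the norm-boundedness of $G$ in place of the Cauchy--Schwarz step that was used there. Concretely, I would start from the sesquilinear expression, using identity (\ref{2}) to write
\[
\langle (\mathbf{M}_{m,F^{(n)},G} - \mathbf{M}_{m,F,G})f,g\rangle = \int_\Omega m(\omega)\langle f, F^{(n)}(\omega) - F(\omega)\rangle\langle G(\omega),g\rangle\,d\mu(\omega),
\]
and then estimate the integrand pointwise.

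Next, I would apply two elementary pointwise bounds: Cauchy--Schwarz in $\mathcal{H}$ gives $|\langle f,F^{(n)}(\omega)-F(\omega)\rangle| \leq \|f\|\,\|F^{(n)}(\omega)-F(\omega)\|$, while the norm-boundedness hypothesis on $G$ yields $|\langle G(\omega),g\rangle| \leq L_G\|g\|$. Given $\epsilon > 0$, the uniform strong convergence $F^{(n)}(\omega) \to F(\omega)$ produces $N$ such that $\|F^{(n)}(\omega)-F(\omega)\| \leq \epsilon$ for all $\omega\in\Omega$ and all $n\geq N$. Plugging this into the estimate produces a bound of the form $\epsilon L_G\|f\|\|g\|\,|m(\omega)|$ on the integrand, which is integrable precisely because $m \in L^1(\Omega,\mu)$.

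Integrating and invoking Theorem \ref{murphy} (to pass from the sesquilinear bound to an operator norm bound) then gives
\[
\|\mathbf{M}_{m,F^{(n)},G} - \mathbf{M}_{m,F,G}\| \leq \epsilon L_G \|m\|_1
\]
for all $n \geq N$, from which operator norm convergence is immediate. There is no genuine obstacle here; the point of the argument is really to see \emph{why} the hypothesis on $G$ has to be strengthened compared with the preceding theorem. In the $L^2$ case, one could afford to keep both inner products $\langle f,F^{(n)}(\omega)-F(\omega)\rangle$ and $\langle G(\omega),g\rangle$ inside an $L^2$-Cauchy--Schwarz against $m$ and then use the Bessel bound for $G$; with $m$ only in $L^1$ that splitting is no longer available, so one factor in the product must be controlled uniformly in $\omega$, which is exactly what $\|G(\omega)\|\leq L_G$ supplies.
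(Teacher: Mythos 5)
Your proposal is correct and follows essentially the same route as the paper: bound the integrand pointwise by $\epsilon\,\|f\|$ (from uniform convergence and Cauchy--Schwarz) times $L_G\,\|g\|$ (from norm-boundedness of $G$), and integrate against $|m|\in L^1$ to get the operator norm bound $\epsilon\,L_G\,\|m\|_1$. Your version is in fact slightly more careful than the paper's, since you work through the weak (sesquilinear) definition and Theorem \ref{murphy} rather than writing the vector norm of the difference directly.
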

\begin{proof} For given $\epsilon > 0$, choose $N$ such that $\| F^{(n)}(\omega) - F(\omega) \| \leq \epsilon$ for all $n \geq N$, for all $\omega \in \Omega$. Then
\begin{align*}
\| (\mathbf{M}_{m,F^{(n)},G} - \mathbf{M}_{m,F,G})f \| & \le \int_{\Omega} | m(\omega) | \underbrace{| \langle f, F^{(n)}(\omega)  - F(\omega) \rangle |}_{\le \epsilon} \underbrace{\| G(\omega) \|}_{\le L_G} \,d\mu(\omega) \\ 
& \le \epsilon \| m \|_{1} L_G \| f\|.
\end{align*}
\end{proof}
In the last two results the roles of $F$ and $G$ can be switched.

\subsection{Examples: Continuous STFT and wavelet multipliers} \label{sec:STFTwavmult0}

Particular cases of continuous frame multipliers, that means multipliers for certain continuous frames, have already been studied and used before. In this section we briefly summarize some earlier results on STFT multipliers and Calder\'on-Toeplitz operators. 

\subsubsection{STFT multipliers}

Continuous frame multipliers have been discussed and extensively used earlier for the continuous frame of Definition \ref{D:Def_STFT}, i.e. the short-time Fourier transform. An operator  of the form
\[
\mathbf{M}_{m, \phi, \psi} f = \int_{-\infty}^{+\infty}\int_{-\infty}^{+\infty} m (a,b) \Psi_{\phi}(f)(a,b)
M_{b}T_{a}\psi\,dadb, 
\]
is called an STFT multiplier. In this context, the associated continuous frame multipliers are also known as time-frequency localization operators. They were first introduced and studied by Daubechies and Paul, \cite{da88}, \cite{dapa88}, where they are used as a mathematical tool to extract specific features of interest of a signal on phase space from its time-frequency representation.
The Wigner distribution constitutes a continuous frame that is essentially identical to the STFT, cf. \cite{wi32} or \cite{Fol}. It is closely related to the so-called Weyl calculus of quantum mechanics. In physics, multipliers for the Wigner distribution have been around for quite a long time in connection with questions of quantization, under the name "Anti-Wick operators" in the work of Berezin, \cite{be71}. They had also appeared earlier in the theory of pseudodifferential operators, cf. \cite{cofe78}. 
In these early works, the symbol is usually taken to be the characteristic function of some portion of the time-frequency plane. In \cite{rato93}, results on decay properties of the eigenvalues as well as smoothness of the eigenfunctions of Wigner multipliers with characteristic functions as symbols are derived.
A first result on Schatten class properties is contained in \cite{po66}, where it is shown for the Weyl correspondence that symbols in $L^2$ lead to Hilbert-Schmidt operators. Boundedness and mapping properties with respect to other Schatten classes of the correspondence between the symbol of a multiplier and the resulting operator are considered extensively in \cite{boco02}, \cite{bocogr04} (for Anti-Wick operators) and in \cite{cogr03}, \cite{cogr05} (for STFT multipliers). In these works, the operators are often interpreted as pseudodifferential operators.
In \cite{boco02}, it is shown that symbols in $L^p$ generate Anti-Wick operators in the Schatten $p$-class. This result is a special case of our Theorem \ref{sec:schatten1}. In \cite{bocogr04}, the theory of Anti-Wick operators is extended to symbols in distributional Sobolev spaces. 
The paper \cite{cogr03} can very well serve as a comprehensive first survey on localization operators, i.e. STFT multipliers. The theory is developed in the framework of time-frequency analysis, see also \cite{coro05}. As symbol classes so-called modulation spaces are considered. This requires that the window functions for the STFTs that form the continuous analysis and synthesis frames also belong to modulation spaces, usually to the Feichtinger algebra $\mathcal{S}_0$. In this case it is shown that symbols in the modulation space $M^{p, \infty}$ are sufficient for localization operators in the Schatten $p$-class, $1 \leq p < \infty$. Since $L^p$ spaces are continuously embedded in the modulation spaces $M^{p, \infty}$, this extends our results in the considered special case. In \cite{cogr05} the authors also present necessary conditions for Schatten classes. 
Symbolic calculus and Fredholm properties for localization operators are discussed in \cite{cogr06}. The PhD thesis \cite{ba10} is concerned with questions of approximation of operators by localization operators and density properties of the set of all localization operators with symbols in certain symbol classes in spaces of operators, equipped with different topologies.

\subsubsection{Calder\'on–-Toeplitz operators}

An operator defined by
\[
\mathbf{M}_{m,\psi} f = \int_{-\infty}^{+\infty}\int_{-\infty}^{+\infty} m (a,b) W_{\psi}(f)(a,b)
\psi^{a,b}\, \frac{dadb}{a^{2}}, 
\]
(in the notation of Definition \ref{D:Def_Wavelet}) is called a Calder\'on–-Toeplitz operator. This is a multiplier for the continuous frame given by the continuous wavelet transform. In this case, the function $m$ is referred to as the upper symbol of the operator, whereas the so-called lower symbol is given by $\tilde{m}(a,b) = \langle \mathbf{M}_{m, \psi} \psi^{a,b}, \psi^{a,b} \rangle$. The concept was first introduced in \cite{roch90} in 1990 as an analogue in terms of the wavelet transform to Toeplitz operators on spaces of analytic functions, for example Bergman spaces. The lower symbol corresponds analogously to the Berezin transform of a Toeplitz operator. Some interesting results on the spectral theory of these operators are shown in \cite{roch92}, for example the so-called correspondence principle, a statement on the dimensions of the spectral projections for certain bounded symbols. A number of mapping properties for Calder\'on-Toeplitz operators (with sufficiently smooth window function $\psi$) depending on the lower symbol are contained in \cite{now93}, for example the boundedness of the operator if and only the lower symbol is bounded, or the compactness of the operator if and only if the lower symbol vanishes at infinity. These are stronger versions of Theorems \ref{tar} and \ref{sec:compact2} in this specialized setting.
Some Schatten class properties for lower symbols in $L^p$, see Theorem \ref{sec:schatten1} for the general case, as well as as for positive upper symbols are proven. Eigenvalue estimates are given in \cite{roch92_2} and \cite{now93_2}. Calder\'on-Toeplitz operators are (along with STFT multipliers) proposed as a tool for time-frequency localization in \cite{da92}. A unified treatment of the elementary theory of STFT multipliers and wavelet transform multipliers (based on the underlying group structures) is given in the textbook \cite{wong}.   

\section{Controlled and weighted continuous frames }

The notion of controlled and weighted frames as introduced in \cite{petant} for discrete frames are closely linked to multipliers.
 So here we look at the corresponding properties for continuous frames.

\subsection{Controlled continuous frames}

\begin{defn}
Let $C\in GL(\h)$. A $C$-controlled continuous frame is a map
$F:\Omega\rightarrow\h$ such that there exist $m_{CL}>0$ and
$M_{CL}<\infty$ such that
\[
m_{CL}\|f\|^2\leq\int_{\Omega}\langle f,F(\omega)\langle
CF(\omega),f\rangle d\mu\leq M_{CL}\|f\|^2 \quad ( f\in\h ).
\]

\end{defn}
We call $L_C f=\int_{\Omega}\langle f,F(\omega)\rangle CF(\omega)
d\mu$ (in weak sense) the controlled continuous frame operator.
Analogue to Proposition 2.4 of \cite{petant} one can show that
$L_C\in GL(\h)$.
\begin{prop}
Let $F:\Omega\rightarrow\h$ be a $C$-controlled continuous frame for
some $C\in GL(\h)$. Then $F$ is a continuous frame for $\h$.
\end{prop}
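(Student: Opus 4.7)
The plan is to realize the continuous frame operator of $F$ explicitly as $C^{-1} L_{C}$, to show that this composition is bounded and invertible, and then to invoke the characterization theorem for continuous frames stated earlier in the paper (the one asserting that $F$ is a continuous frame if and only if $S_{F}$ is bounded and invertible). The preceding sentence in the excerpt already grants that $L_{C}\in GL(\h)$, so I plan to use this as a black box rather than re-derive it.

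First I would establish the following key identity. For arbitrary $f,g\in\h$,
\begin{align*}
\langle C^{-1}L_{C}f,g\rangle
  &= \langle L_{C}f,(C^{-1})^{*}g\rangle \\
  &= \int_{\Omega}\langle f,F(\omega)\rangle\,\langle CF(\omega),(C^{-1})^{*}g\rangle\,d\mu(\omega) \\
  &= \int_{\Omega}\langle f,F(\omega)\rangle\,\langle F(\omega),g\rangle\,d\mu(\omega),
\end{align*}
where the middle line is just the weak definition of $L_{C}$ and the last line uses $\langle CF(\omega),(C^{-1})^{*}g\rangle=\langle F(\omega),C^{*}(C^{-1})^{*}g\rangle=\langle F(\omega),(C^{-1}C)^{*}g\rangle=\langle F(\omega),g\rangle$. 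Two consequences are worth noting. One is that the right-hand integral automatically converges absolutely for every pair $f,g\in\h$, simply because the weak integral defining $L_{C}$ does, with the choice of test vector $(C^{-1})^{*}g$. The other is that setting $g=f$ gives $\int_{\Omega}|\langle f,F(\omega)\rangle|^{2}\,d\mu(\omega)=\langle C^{-1}L_{C}f,f\rangle$, so the bounded operator $S_{F}:=C^{-1}L_{C}$ is precisely the continuous frame operator associated to $F$.

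Next I would observe that $S_{F}=C^{-1}L_{C}\in GL(\h)$, since both $C^{-1}$ (because $C\in GL(\h)$ by hypothesis) and $L_{C}$ (by the analogue of Proposition~2.4 of \cite{petant} cited just above) belong to $GL(\h)$. The characterization theorem for continuous frames then immediately yields that $F$ is a continuous frame, with explicit frame bounds $B\le\|C^{-1}\|\,\|L_{C}\|$ on the upper side and $A\ge(\|L_{C}^{-1}\|\,\|C\|)^{-1}$ on the lower side. The only substantive ingredient is the invertibility of $L_{C}$; once that is granted the remaining work is the algebraic computation above, so I do not expect a real obstacle beyond careful bookkeeping with the adjoint $C^{*}$ and its inverse.
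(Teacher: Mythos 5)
Your proposal is correct and takes essentially the same route as the paper: both identify $S_F=C^{-1}L_C$, use the invertibility of $L_C$ (imported from the controlled-frame theory) to conclude that $S_F$ is bounded and invertible, and deduce the frame property. Your version merely spells out the adjoint bookkeeping behind the weak identity that the paper dismisses with ``since $C$ is linear,'' and adds explicit frame bounds.
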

\begin{proof}
Since $C$ is linear we have
\[
S_F f=\int_{\Omega}\langle f,F(\omega)\rangle F(\omega)
d\mu=C^{-1}\int_{\Omega}\langle f,F(\omega)\rangle CF(\omega)
d\mu=C^{-1} L_C f.
\]
Therefore $S_F$ is a bounded, positive and invertible operator and so
$F$ is a continuous frame.
\end{proof}
By definition $L_C$ is positive and $L_C=CS_F=S_F C^*$. Therefore it is easy to show that, given $C\in GL(\h)$ is a self-adjoint operator, then the mapping $F$ is a
$C$-controlled frame if and only if it is a continuous frame for
$\h$, $C$ is positive and commutes with $S_F$.

The following proposition shows that we can retrieve a continuous frames multiplier from a multiplier
of controlled frames. Actually, the role played by controlled
operators is that of a precondition matrices. 
\begin{prop}
Let $C,D\in GL(\h)$ be self-adjoint operators. If $F$ and $G$ are $C$-
respectively $D$-controlled frames and $\mathbb{M}$ is their multiplier
operator with respect to $m$, then $D^{-1}\mathbb{M}C^{-1}=
\mathbf{M}_{m,F,G}$.
\end{prop}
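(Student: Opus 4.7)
The plan is to unravel the definition of the multiplier $\mathbb{M}$ of the controlled frames and then use linearity together with the self-adjointness of $C$ and $D$ to pull the controlling operators outside the integral, leaving the standard multiplier $\mathbf{M}_{m,F,G}$ sandwiched between $D$ and $C$.

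First, I would fix the interpretation: since $F$ is $C$-controlled and $G$ is $D$-controlled, the natural analysis/synthesis pair for the controlled setting is $(CF, DG)$, so their multiplier is defined (weakly) by
\[
\langle \mathbb{M} f, g\rangle = \int_\Omega m(\omega)\,\langle f, CF(\omega)\rangle\,\langle DG(\omega), g\rangle\,d\mu(\omega), \quad f,g \in \h.
\]
Using self-adjointness of $C$ I rewrite $\langle f, CF(\omega)\rangle = \langle Cf, F(\omega)\rangle$, and similarly self-adjointness of $D$ gives $\langle DG(\omega), g\rangle = \langle G(\omega), Dg\rangle$. Substituting back,
\[
\langle \mathbb{M} f, g\rangle = \int_\Omega m(\omega)\,\langle Cf, F(\omega)\rangle\,\langle G(\omega), Dg\rangle\,d\mu(\omega) = \langle \mathbf{M}_{m,F,G}(Cf), Dg\rangle.
\]

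The right-hand side equals $\langle D\,\mathbf{M}_{m,F,G}\,C f, g\rangle$. Since this holds for all $g\in\h$, I conclude $\mathbb{M} = D\,\mathbf{M}_{m,F,G}\,C$, and multiplying on the left by $D^{-1}$ and on the right by $C^{-1}$ (both of which exist as bounded inverses since $C,D\in GL(\h)$) yields $D^{-1}\mathbb{M} C^{-1} = \mathbf{M}_{m,F,G}$, as desired.

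There is essentially no hard step here: the content is purely the algebraic manipulation of moving the self-adjoint controls across the inner products, which is legitimate pointwise in $\omega$ and therefore under the integral. The only subtlety worth mentioning explicitly is that one must justify the weak definition of $\mathbb{M}$ in advance — that the sesquilinear form on the right is bounded — but this follows exactly as in Lemma \ref{tar} once one notes that $CF$ and $DG$ are again Bessel mappings (with bounds scaled by $\|C\|^2$ and $\|D\|^2$ respectively), so that Theorem \ref{murphy} applies and produces the bounded operator $\mathbb{M}$.
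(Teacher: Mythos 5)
Your proof is correct and follows essentially the same route as the paper: the paper applies the factorization $\mathbf{M}_{m,F,G}=T_G D_m T_F^{*}$ together with the identities $T_{DG}=DT_G$ and $T_{CF}^{*}=T_F^{*}C$ (the latter using self-adjointness), which is exactly your computation of moving $C$ and $D$ across the inner products in the weak definition to get $\mathbb{M}=D\,\mathbf{M}_{m,F,G}\,C$. Your extra remark that $CF$ and $DG$ remain Bessel, so the weak definition of $\mathbb{M}$ is legitimate, is a welcome detail the paper leaves implicit.
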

\begin{proof}
It is easy to see that for the $C$ and $D$ controlled frames $F$ and
 $G$, we have $T_C=CT$ and $T^*_D=T^* D$. Now the representation
(\ref{rep1}) results $D^{-1}\mathbb{M}C^{-1}= \mathbf{M}_{m,F,G}$.
\end{proof}

\subsection{Weighted continuous frames}

\begin{defn}
Let $\h $ be a complex Hilbert space and $(\Omega ,\mu)$ be a
measure space with positive measure $\mu$ and $m:\Omega\rightarrow
\mathbb{R}^+$. The mapping $F:\Omega\to\h$ is called a weighted
continuous
frame with respect to $(\Omega ,\mu)$ and $m$, if\\
\begin{enumerate}
\item  $F$ is weakly-measurable and $m$ is measurable; \\
\item  there exist constants $A, B> 0$ such that
\begin{equation}\label{wecof}
A\|f\|^{2}\leq \int_{\Omega}m(\omega)|\langle
f,F({\omega})\rangle|^{2}\,d\mu(\omega) \leq B\|f\|^{2}, \quad (
f\in \h).
\end{equation}
\end{enumerate}
 The mapping $F$ is called  weighted \emph{Bessel} if
the second inequality in (\ref{wecof}) holds.
\end{defn}

By using some ideas of \cite{StBa}, we have the following result.
\begin{thm}
Let $\mathbf{M}_{m,F,G}$ be invertible. Then:
\begin{enumerate}
\item  If $G$ is a Bessel map, then $\overline{m}F$ satisfies the lower frame condition.\\
\item  If $F$ is a Bessel map, then $mG$ satisfies the lower frame condition.\\
\item If $F$ and $mG$ (respectively $G$ and $\overline{m}F$) are Bessel maps, then they are continuous frames.\\
\item If $G$ is a Bessel map and $m\in L^\infty$, $m \neq 0$, then $F$ has a lower
bound.\\
\item If $F$ and $G$ are Bessel maps and $m\in L^\infty$, $m\neq 0$,
then both of $F$ and $G$ are continuous frames.
\end{enumerate}
\end{thm}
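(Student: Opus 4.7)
The central ingredient is that invertibility of $\mathbf{M}_{m,F,G}$ furnishes a constant $\alpha>0$, namely $\alpha=1/\|\mathbf{M}_{m,F,G}^{-1}\|$, satisfying
\[
\alpha\|f\|\le\|\mathbf{M}_{m,F,G}f\|,\qquad f\in\h.
\]
My strategy for (1) and (2) is to bound $\|\mathbf{M}_{m,F,G}f\|$ from above in terms of the weighted analysis integral associated with one of the mappings, using the Bessel bound of the other; comparing with the lower bound above then forces a lower frame inequality on the remaining mapping. Parts (3)--(5) are harvested from (1)--(2) by careful bookkeeping.

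For (1), I would write $\|\mathbf{M}_{m,F,G}f\|=\sup_{\|g\|=1}|\langle\mathbf{M}_{m,F,G}f,g\rangle|$, expand via the weak definition in Definition \ref{definitioncontframemult}, and apply the Cauchy--Schwarz inequality in $L^2(\Omega,\mu)$ while using the Bessel bound $B_G$ of $G$ on one factor:
\[
|\langle\mathbf{M}_{m,F,G}f,g\rangle|\le\Bigl(\int_\Omega|m(\omega)|^2|\langle f,F(\omega)\rangle|^2\,d\mu(\omega)\Bigr)^{1/2}\sqrt{B_G}\,\|g\|.
\]
Together with $\alpha\|f\|\le\|\mathbf{M}_{m,F,G}f\|$ and the identity $|m(\omega)\langle f,F(\omega)\rangle|=|\langle f,\overline{m(\omega)}F(\omega)\rangle|$, this yields the lower frame condition for $\overline{m}F$ with constant $\alpha^2/B_G$. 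For (2) I would invoke the identity $\mathbf{M}_{m,F,G}^*=\mathbf{M}_{\overline{m},G,F}$ already proven in the paper, note that adjoints of invertible operators are invertible, and apply (1) to the adjoint multiplier; the roles of $F$ and $G$ swap and the symbol is conjugated, so the output becomes the lower frame bound for $\overline{\overline{m}}G=mG$.

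The trickier bookkeeping is in (3), since it demands that $F$ itself (not merely $\overline{m}F$) be a frame. The key move is the rewriting
\[
\mathbf{M}_{m,F,G}=\mathbf{M}_{1,F,mG}=\mathbf{M}_{1,\overline{m}F,G},
\]
which follows directly from linearity absorbing the symbol into the synthesis or analysis factor. When $F$ and $mG$ are both Bessel, the first identity exhibits $\mathbf{M}_{m,F,G}$ as a multiplier with symbol $1$ and Bessel factors $F$ and $mG$; applying (1) then gives the lower frame condition for $F$, while (2) gives it for $mG$, so both are continuous frames. The parenthetical case is handled symmetrically via the second identity.

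For (4), the lower frame bound on $\overline{m}F$ obtained in (1) passes to $F$ by pulling $|m(\omega)|^2\le\|m\|_\infty^2$ out of the integral; this is precisely where the hypotheses $m\in L^\infty$ and $m\neq 0$ are used, the latter ensuring $\|m\|_\infty>0$ so that division is legitimate. Part (5) is then (4) combined with its symmetric statement for $G$ coming from (2). I expect the main conceptual obstacle to be spotting the rewriting trick in (3), which is what lets a statement about the \emph{weighted} mappings $\overline{m}F$ and $mG$ be converted into one about $F$ and $mG$ themselves; once it is in hand, the rest is Cauchy--Schwarz and routine bookkeeping.
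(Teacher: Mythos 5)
Your proposal is correct and follows essentially the same route as the paper: Cauchy--Schwarz combined with the lower bound coming from invertibility (the paper realizes your $\alpha\|f\|\le\|\mathbf{M}_{m,F,G}f\|$ by testing against $g=(\mathbf{M}^*_{m,F,G})^{-1}f$, which gives the same constant since $\|(\mathbf{M}^*)^{-1}\|=\|\mathbf{M}^{-1}\|$), the symbol-absorption identity $\mathbf{M}_{m,F,G}=\mathbf{M}_{1,F,mG}$ for part (3), and pulling out $\|m\|_\infty^2$ for part (4). The only cosmetic difference is that you derive (2) via the adjoint identity where the paper just says ``similar to (1)''; both are fine.
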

\begin{proof}
\begin{enumerate}
\item  For $f,g\in\h$, we have
\[
|\langle\mathbf{M}_{m,F,G}f,g\rangle|\leq\left(\int_{\Omega}|\langle
f,(\overline{m}F)(\omega)\rangle|^{2}
\,d\mu(\omega)\right)^{\frac{1}{2}}\left(\int_{\Omega}|\langle
G(\omega), g \rangle|^{2}\, d\mu(\omega)\right)^{\frac{1}{2}}
\]
without loss of generality, we can assume $f\neq 0$ and
\[
\int_{\Omega}|\langle f,(\overline{m}F)(\omega)\rangle|^{2}
\,d\mu(\omega)<\infty.
\] So
\[
|\langle\mathbf{M}_{m,F,G}f,g\rangle|\leq\sqrt{B_G}\,\|g\|\left(\int_{\Omega}|\langle
f,(\overline{m}F)(\omega)\rangle|^{2}
\,d\mu(\omega)\right)^{\frac{1}{2}}.
\]
By letting $g=(\mathbf{M}^*_{m,F,G})^{-1}f$ we have
\begin{eqnarray*}
\|f\|^2 & \leq&\sqrt{B_G}\,\|(\mathbf{M}^*_{m,F,G})^{-1}
\|\|f\|\left(\int_{\Omega}|\langle
f,(\overline{m}F(\omega))\rangle|^{2}
\,d\mu(\omega)\right)^{\frac{1}{2}}.
\end{eqnarray*}
So
\[
\frac{1}{\sqrt{B_G}\,\|(\mathbf{M}^*_{m,F,G})^{-1}
\|}\|f\|\leq\left(\int_{\Omega}|\langle
f,(\overline{m}F)(\omega)\rangle|^{2}
\,d\mu(\omega)\right)^{\frac{1}{2}}.
\]\\
\item  Similar to (1).
\item
Let $F$ be a Bessel map, then by part (1), $mG$ has a lower bound and so
it is a frame. If $mG$ is a Bessel map then by (2) $1 \cdot F = F$ satisfies the lower frame inquality and therefore is a frame.
\item
By (1) $\overline{m} F$ satisfies the lower frame inequality. Therefore
$$A \| f \|^2 \le \int_{\Omega}|\langle f,(\overline{m}F)(\omega)\rangle|^{2}
\,d\mu(\omega) \le \| m \|_\infty^2 \int_{\Omega}|\langle f, F(\omega)\rangle|^{2}
\,d\mu(\omega) . $$
And so
$$\frac{A}{\| m \|_\infty^2}\le \int_{\Omega}|\langle f, F(\omega)\rangle|^{2}
\,d\mu(\omega) . $$
\item Follows from (1), (2) and (3).
\end{enumerate}
\end{proof}

The following theorem finds a dual of a 
continuous frame in the case that the multiplier operator is
invertible. (Analogous to the discrete results in \cite{stobalrep11}).
\begin{thm}
Let $\mathbf{M}_{m,F,G}$ be invertible and $G$ be a continuous frame.
Then 
$(\mathbf{M}_{m,F,G}^{-1})^{*}\overline{m}F$ is a dual.
\end{thm}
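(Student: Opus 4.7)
The plan is to show two things: (i) the reproducing identity
\[
\langle f,g\rangle = \int_{\Omega} \langle f, G(\omega)\rangle \langle H(\omega), g\rangle\,d\mu(\omega)
\]
for $H(\omega) := (\mathbf{M}_{m,F,G}^{-1})^{*}\overline{m(\omega)}F(\omega)$ and all $f,g \in \h$, and (ii) that $H$ is itself a continuous frame. The reproducing identity is the heart of the matter; the frame inequalities for $H$ then follow almost for free from the fact that $G$ is a frame and the identity has already been established.

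For (i), pulling the scalar $\overline{m(\omega)}$ out of $H(\omega)$ and sliding $(\mathbf{M}_{m,F,G}^{-1})^{*}$ across the inner product reduces the right-hand side to
\[
\int_{\Omega}\overline{m(\omega)}\langle f,G(\omega)\rangle\langle F(\omega),\mathbf{M}_{m,F,G}^{-1}g\rangle\,d\mu(\omega),
\]
which by the weak definition of the multiplier together with the earlier identity $(\mathbf{M}_{m,F,G})^{*} = \mathbf{M}_{\overline{m},G,F}$ is exactly $\langle \mathbf{M}_{m,F,G}^{*}f,\mathbf{M}_{m,F,G}^{-1}g\rangle = \langle f, \mathbf{M}_{m,F,G}\mathbf{M}_{m,F,G}^{-1}g\rangle = \langle f,g\rangle$. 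Equivalently, on the operator side one reads off $T_{H}^{*} = D_{m}T_{F}^{*}\mathbf{M}_{m,F,G}^{-1}$, so that $T_{H}T_{G}^{*} = (\mathbf{M}_{m,F,G}^{-1})^{*}T_{F}D_{\overline{m}}T_{G}^{*} = (\mathbf{M}_{m,F,G}^{-1})^{*}\mathbf{M}_{\overline{m},G,F} = (\mathbf{M}_{m,F,G}^{-1})^{*}\mathbf{M}_{m,F,G}^{*} = I$, the reproducing identity in operator form.

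For (ii), weak measurability of $H$ is immediate since $(\mathbf{M}_{m,F,G}^{-1})^{*}$ is a fixed bounded operator. For the Bessel bound, $\langle f, H(\omega)\rangle = m(\omega)\langle \mathbf{M}_{m,F,G}^{-1}f, F(\omega)\rangle$, and the standing hypothesis $m\in L^{\infty}(\Omega,\mu)$ (implicit in $\mathbf{M}_{m,F,G}$ being an invertible bounded operator) together with the Bessel property of $F$ yields
\[
\int_{\Omega}|\langle f, H(\omega)\rangle|^{2}\,d\mu(\omega) \leq \|m\|_{\infty}^{2}B_{F}\|\mathbf{M}_{m,F,G}^{-1}\|^{2}\|f\|^{2}.
\]
For the lower bound, substitute $g=f$ in the reproducing identity and apply Cauchy--Schwarz with the Bessel bound $B_{G}$ of $G$, obtaining $\|f\|^{2} \leq \sqrt{B_{G}}\|f\|\bigl(\int|\langle f, H(\omega)\rangle|^{2}\,d\mu\bigr)^{1/2}$, i.e.\ $1/B_{G}$ as the lower frame bound.

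The only genuine obstacle is clerical: keeping track of which mapping occupies the analysis slot versus the synthesis slot when invoking $(\mathbf{M}_{m,F,G})^{*} = \mathbf{M}_{\overline{m},G,F}$, since these swap under the adjoint. Once the scalar $\overline{m(\omega)}$ and the operator $(\mathbf{M}_{m,F,G}^{-1})^{*}$ are correctly extracted, everything collapses to $\mathbf{M}_{m,F,G}^{*}\cdot (\mathbf{M}_{m,F,G}^{*})^{-1}$.
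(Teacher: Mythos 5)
Your argument is correct, and its core is the same computation the paper uses: the published proof substitutes $\mathbf{M}_{m,F,G}^{-1}f$ into the weak definition of the multiplier and pulls $(\mathbf{M}_{m,F,G}^{-1})^{*}\,\overline{m(\omega)}$ onto $F(\omega)$, which is exactly your part (i) run in the opposite direction (your reduction to $\langle \mathbf{M}_{m,F,G}^{*}f,\mathbf{M}_{m,F,G}^{-1}g\rangle$, or equivalently $T_H T_G^{*}=I$; your identity is the complex conjugate of theirs with the two slots swapped, which is immaterial since the duality relation is symmetric under conjugation). Where you genuinely go beyond the paper is part (ii): the paper's definition of a dual pair requires both mappings to be continuous frames, yet the published proof establishes only the reproducing identity and never checks that $H=(\mathbf{M}_{m,F,G}^{-1})^{*}\overline{m}F$ satisfies the frame inequalities; your verification of the upper bound and of the lower bound $1/B_G$ (obtained by setting $g=f$ and applying Cauchy--Schwarz, the same device the paper uses in its weighted-frame theorem) closes that gap. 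One caveat on your Bessel estimate: boundedness and invertibility of $\mathbf{M}_{m,F,G}$ do not by themselves force $m\in L^{\infty}(\Omega,\mu)$ --- Theorem \ref{ubs} produces bounded multipliers from $m\in L^{1}$ with norm-bounded Bessel mappings --- so the hypothesis $m\in L^{\infty}$ is an additional assumption you should state explicitly rather than call implicit; the reproducing identity and the lower frame bound for $H$ do not need it.
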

\begin{proof} By replacing $f$ with  $\mathbf{M}_{m,F,G}^{-1}f$ in
\[
\langle \mathbf{M}_{m,F,G}f,g\rangle=\int_{\Omega}m(\omega)\langle
f, F(\omega)\rangle\langle G(\omega),g\rangle d\mu
\]
we get
\begin{eqnarray*}
\langle f,g\rangle&=&\int_{\Omega}m(\omega)\langle
\mathbf{M}_{m,F,G}^{-1}f, F(\omega)\rangle\langle G(\omega),g\rangle
d\mu \\&=&\int_{\Omega}\langle f,(\mathbf{M}_{m,F,G}^{-1})^*
\overline{m(\omega)} F(\omega)\rangle\langle G(\omega),g\rangle
d\mu.
\end{eqnarray*}
\end{proof}

\textbf{Acknowledgment}: Some of the results in this paper were
obtained when A. Rahimi visited the Acoustics Research
Institute, Austrian Academy of Sciences, Austria. He thanks this
institute for their hospitality.
\\This work was supported by the WWTF project MULAC ('Frame Multipliers: Theory and Application in Acoustics; MA07-025).


\end{document}